\newtheorem{theorem}{Theorem}[section]
\newtheorem{proposition}[theorem]{Proposition}
\newtheorem{lemma}[theorem]{Lemma}
\newtheorem{corollary}[theorem]{Corollary}
\newtheorem{definition}[theorem]{Definition}
\newtheorem{conjecture}[theorem]{Conjecture}
\theoremstyle{definition}
\newtheorem{construction}[theorem]{Construction}
\newtheorem{remark}[theorem]{Remark}
\renewenvironment{proof}{{\noindent\bf Proof.}}{\hfill $\Box$\par\vskip3mm}
\renewcommand{\qedsymbol}{$\Box$}
\DeclareMathOperator{\tb}{tb}
\DeclareMathOperator{\rot}{r}
\begin{document}

\def\reals{{\mathbb R}}
 \def\ch{{\mathcal H}}
 \def\cA{{\mathcal A}}
 \def\cD{{\mathcal D}}
 \def\cK{{\mathcal K}}
 \def\cC{{\mathcal C}}
 \def\cN{{\mathcal N}}
 \def\cR{{\mathcal R}}
 \def\cS{{\mathcal S}}
 \def\cT{{\mathcal T}}
 \def\cV{{\mathcal V}}
 %notation for cabling torus
 \def\ta{{\mathcal T}_{\subset}}
 \def\cI{{\mathcal I}}
 %notation for cabling operation
 \def\bC{{\bf C}}
 %notation for braid axis
 \def\axis{{\bf A}}
 %notation for braid fibration
 \def\fibr{{\bf H}}
 %a-arcs
 \def\ba{{\bf a}}
 %b-arcs
 \def\bb{{\bf b}}
 %c-arcs
 \def\bc{{\bf c}}
 %e-arcs
 \def\be{{\bf e}}
 \def\d{{\delta}} 
 \def\ci{{\circ}} 
 \def\e{{\epsilon}} 
 \def\l{{\lambda}} 
 \def\L{{\Lambda}} 
 \def\m{{\mu}} 
 \def\n{{\nu}} 
 \def\o{{\omega}} 
 \def\s{{\sigma}} 
 \def\v{{\varphi}} 
 \def\a{{\alpha}} 
 \def\b{{\beta}} 
 \def\p{{\partial}} 
 \def\r{{\rho}} 
 \def\ra{{\rightarrow}} 
 \def\lra{{\longrightarrow}} 
 \def\g{{\gamma}} 
 \def\D{{\Delta}} 
 \def\La{{\Leftarrow}} 
 \def\Ra{{\Rightarrow}} 
 \def\x{{\xi}} 
 \def\c{{\mathbb C}} 
 \def\z{{\mathbb Z}} 
 \def\2{{\mathbb Z_2}} 
 \def\q{{\mathbb Q}} 
 \def\t{{\tau}} 
 \def\u{{\upsilon}} 
 \def\th{{\theta}} 
 \def\la{{\leftarrow}} 
 \def\lla{{\longleftarrow}} 
 \def\da{{\downarrow}} 
 \def\ua{{\uparrow}} 
 \def\nwa{{\nwtarrow}} 
 \def\swa{{\swarrow}} 
 \def\nea{{\netarrow}} 
 \def\sea{{\searrow}} 
 \def\hla{{\hookleftarrow}} 
 \def\hra{{\hookrightarrow}} 
 \def\sl{{SL(2,\mathbb C)}} 
 \def\ps{{PSL(2,\mathbb C)}} 
 \def\qed{{\hfill$\diamondsuit$}} 
 \def\pf{{\noindent{\bf Proof.\hspace{2mm}}}} 
 \def\ni{{\noindent}} 
 \def\sm{{{\mbox{\tiny M}}}} 
 \def\sc{{{\mbox{\tiny C}}}}

\title[Studying uniform thickness II]{Studying uniform thickness II:\\Transversely non-simple iterated torus knots}

\author{Douglas J. LaFountain}
\address{Centre for Quantum Geometry of Moduli Spaces \\ Aarhus University}
\email{dlafount@imf.au.dk}
\urladdr{http://pure.au.dk/portal/en/dlafount@imf.au.dk}

%\author{Douglas J. LaFountain}
%\keywords{Legendrian, transverse, convex, uniform thickness property}
%\thanks{2000 \textit{Mathematics Subject Classification}. Primary 57M25, 57R17;
%Secondary 57M50}

\begin{abstract}
We prove that an iterated torus knot type fails the uniform thickness property (UTP) if and only if all of its iterations are positive cablings, which is precisely when an iterated torus knot type supports the standard contact structure.  We also show that all iterated torus knots that fail the UTP support cabling knot types that are transversely non-simple. 
\end{abstract}

\maketitle

\section{Introduction}

Let $K$ be a knot type in $S^3$ with the standard tight contact structure $\xi_{std}$.  The {\em uniform thickness property} (UTP) is fundamental to understanding embeddings of solid tori representing $K$ in $(S^3, \xi_{std})$; in brief, $K$ satisfies the UTP if every such solid torus thickens to one with convex boundary slope $1/\overline{\tb}(K)$.  If there exists a solid torus that does not exhibit thickening, $K$ fails the UTP, and such a solid torus is said to be {\em non-thickenable}.  The UTP was first introduced by Etnyre and Honda \cite{[EH1]}, who showed that the $(2,3)$-torus knot fails the UTP by identifying such non-thickenable tori.  They then used this to show that the $(2,3)$-torus knot supports a transversely non-simple cabling, i.e., a knot type obtained by taking a $(p,q)$ curve on the boundary of a tubular neighborhood of $K$.  In joint work with Etnyre and Tosun \cite{[ELT]}, we extended this study to show that all positive $(p,q)$-torus knots fail the UTP, and support non-simple cablings; furthermore, we established a complete Legendrian and transverse classification for cables of positive torus knots through the study of {\em partially thickenable tori}. In \cite{[L]}, we also showed that the general class of knot types $K$ which both satisfy the UTP and are Legendrian simple is closed under the operation of cabling.  An application of this was the identification of large classes of Legendrian simple iterated torus knot types, i.e., iterated cablings of torus knots.

%A $(p,q)$-cable of $K$ is the knot type obtained by taking a $(p,q)$ curve on the boundary of a tubular neighborhood of $K$, and we will say that $K$ {\em supports} the $(p,q)$-cable.  In this paper we study iterated cables of torus knots.  

%Let $K$ be a knot type in $S^3$ with the standard tight contact structure $\xi_{std}$.  The {\em uniform thickness property} (UTP) is a property which $K$ either satisfies or fails, and is fundamental to understanding embeddings of solid tori representing $K$ in $(S^3, \xi_{std})$.  In this paper, we continue our general study of the UTP in the context of iterated torus knot types.  Motivation for this study is due to the work of Etnyre and Honda \cite{[EH1]}, who showed that failure of the UTP is a necessary condition for transverse non-simplicity in the class of iterated torus knots.  They also established that the $(2,3)$-torus knot fails the UTP and supports a transversely non-simple cabling.  In joint work with Etnyre and Tosun \cite{[ELT]}, we extended this study to show that all positive $(p,q)$-torus knots fail the UTP, and furthermore established a complete Legendrian and transverse classification for cables of positive torus knots. In \cite{[L]}, we also showed that the general class of knot types $K$ which both satisfy the UTP and are Legendrian simple is closed under the operation of cabling; an application of this was the identification of large classes of Legendrian simple iterated torus knot types.

In this paper we determine precisely which iterated torus knot types satisfy the UTP, and which fail the UTP; this is the first complete UTP classification for a large class of knots.  We also prove that failure of the UTP for an iterated torus knot type is a sufficient condition for the existence of transversely non-simple cablings of that knot.  Specifically, we have the following two theorems and corollary:

\begin{theorem}
\label{main theorem}
Let $K_r = ((P_1,q_1),...,(P_i,q_i),...,(P_r,q_r))$ be an iterated torus knot, where the $P_i$'s are measured in the standard Seifert framing, and $q_i > 1$ for all $i$.  Then $K_r$ fails the UTP if and only if $P_i > 0$ for all $i$, where $1 \leq i \leq r$.
\end{theorem}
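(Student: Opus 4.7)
The plan is to prove the two implications separately. The \emph{only if} direction is essentially supplied by the prequel paper, while the \emph{if} direction is the substantive content, which I would attack by induction on the cabling depth $r$.

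For the \emph{only if} direction, I appeal to the necessary conditions for UTP failure established in \cite{[L]}: there it was shown that whenever some cabling coefficient $P_i$ is non-positive (with the standing convention $q_i > 1$), the iterated torus knot $K_r$ must satisfy the UTP. Contrapositively, failure of the UTP forces $P_i > 0$ for every $i$, which is exactly the direction we need.

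For the \emph{if} direction I would induct on $r$. The base case $r=1$ is the result of Etnyre--Honda that each positive torus knot $(P_1,q_1)$ fails the UTP, via an explicit standard solid torus neighborhood whose convex boundary has dividing slope beyond the range permitted by the UTP bound $w(K_1) = \overline{t}(K_1)$. For the inductive step, assume $K_{r-1} = ((P_1,q_1),\dots,(P_{r-1},q_{r-1}))$ satisfies the hypothesis and fails the UTP, witnessed by a standard solid torus neighborhood $N_{r-1}$ of $K_{r-1}$ whose convex boundary carries a non-UTP-compatible slope. Since $P_r > 0$, I would realize the $(P_r,q_r)$-cable $K_r$ as a Legendrian ruling (or divide) on a convex torus placed inside $N_{r-1}$, arrange that its contact framing, re-expressed in the preferred framing of $K_r$ in $S^3$, strictly exceeds the Thurston--Bennequin bound allowed by the UTP for $K_r$, and then thicken to obtain a standard neighborhood $N_r$ of $K_r$ that certifies failure of the UTP.

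The main obstacle is the framing bookkeeping in the inductive step: boundary slopes on $\partial N_{r-1}$ are most naturally measured in the framing of $N_{r-1}$ as an abstract solid torus, which differs from the preferred framing of $K_r$ in $S^3$ by a correction depending on $P_1,\dots,P_{r-1}$ together with $P_r q_r$. Tracking this change of basis carefully, and verifying that the strict inequality produced by the inductive hypothesis survives the translation, is where the positivity assumption $P_r > 0$ is genuinely used; if some $P_r$ were non-positive the correction would reverse the relevant inequality and the thickened cable would fail to violate the UTP, consistent with the necessary conditions from \cite{[L]}.
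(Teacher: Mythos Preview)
Your proposal has a genuine gap in the \emph{if} direction, rooted in a misreading of how the UTP actually fails for these knots. The paper shows (item 2 of its inductive hypothesis, established in Lemmas 5.3, 5.5, and 5.7) that $w(K_r) = \overline{tb}(K_r)$ for every iterated torus knot with all $P_i > 0$. So condition 1 of the UTP \emph{holds}; the failure is entirely via condition 2, witnessed by solid tori $N_r^k$ whose boundary slopes are strictly \emph{smaller} than $\frac{1}{\overline{tb}(K_r)}$ and yet cannot be thickened. Your sketch aims to produce a neighborhood whose contact framing ``strictly exceeds the Thurston--Bennequin bound,'' but no such excess exists here, and no amount of framing bookkeeping can certify that a given solid torus fails to thicken. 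The non-thickening arguments in the paper (Lemmas 4.3 and 5.6) instead view the complement of $N_r^k$ (or $N_r^k \setminus N_{r+1}^{k'}$) as a Seifert fibered space, pass to a finite cover where it becomes a product $S^1 \times \Sigma$ with Legendrian fibers of fixed twisting, and then invoke the classification of tight contact structures on solid tori to rule out any isotopic curve of higher twisting. This is the essential mechanism, and it is absent from your outline.

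Relatedly, the bare hypothesis ``$K_{r-1}$ fails the UTP'' is far too weak to run the induction. The paper must carry a five-part inductive package: an infinite family of non-thickenable $N_{r-1}^k$ with prescribed intersection boundary slopes $-\frac{k+1}{A_{r-1}k + B_{r-1}}$, each with a \emph{universally tight} contact structure whose meridian-disc bypasses all have one sign, and with preferred longitudes of rotation number zero. These features are exactly what is needed to embed candidate $N_r^{k'}$ inside $N_{r-1}^k$, verify universal tightness upstairs, and make the covering-space argument go through. Two further corrections: the base case for arbitrary positive torus knots is new to \S4 of this paper (Etnyre--Honda handled only $(2,3)$); and the \emph{only if} direction is not simply imported from \cite{[L]}, since Lemma 3.3 presupposes that $K_r$ satisfies the full inductive hypothesis established here, so the two directions are intertwined rather than independent.
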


\noindent In the second theorem, $\chi(K)$ is the Euler characteristic of a minimal genus Seifert surface for a knot $K$:

\begin{theorem}
\label{second theorem}
If $K_r$ is an iterated torus knot that fails the UTP, then it supports infinitely many transversely non-simple cablings $K_{r+1}$ of the form $(-\chi(K_r),k+1)$, where $k$ ranges over an infinite subset of positive integers.
\end{theorem}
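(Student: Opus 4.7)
The plan is to exploit the failure of UTP to produce, for infinitely many $k$, two Legendrian representatives of the cabling $K_{r+1} = (-\chi(K_r), k+1)$ whose transverse push-offs share the same self-linking number but cannot be transversely isotopic. By Theorem~\ref{main theorem}, $K_r$ failing the UTP means all $P_i > 0$, and thus there exists a convex thickening of a standard Legendrian neighborhood of $K_r$ whose boundary slope strictly exceeds that of a generic UTP-respecting thickening. I will call such a thickened neighborhood a \emph{non-standard} solid torus, in contrast with a \emph{standard} solid torus realizing the generic UTP slope.

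I would first fix framing conventions so that the cabling slope $-\chi(K_r)$ coincides with the contact framing of the maximal Thurston--Bennequin Legendrian representative of $K_r$. For each $k$ in an infinite set of positive integers, I would Legendrian-realize a $(-\chi(K_r),k+1)$-curve as a ruling curve on the convex boundary of a standard solid torus to produce a representative $L^s_k$, and analogously on the boundary of a non-standard solid torus to produce $L^n_k$. Using Honda's classification of tight contact structures on $T^2\times I$, together with the standard twisting-number formulas for cables on convex tori, I would compute $tb$ and $r$ for both families. By restricting $k$ to an arithmetic progression chosen so that the discrepancy in rotation number exactly offsets the discrepancy in Thurston--Bennequin number between the two constructions, I obtain $sl(L^s_k) = sl(L^n_k)$ along an infinite subfamily.

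The heart of the proof is showing that these two transverse knots are not transversely isotopic. I would proceed by contradiction: a transverse isotopy, after stabilizing down to a common Legendrian representative, would have to be realizable inside a contact solid torus representing $K_r$ whose boundary slope is at least as large as the non-standard slope, contradicting the thickness bound that underlies the construction of $L^n_k$. The main obstacle is executing this contradiction rigorously in the iterated setting: rotation-number contributions accumulate through each cabling, and one must track them carefully through the full sequence $(P_1,q_1),\ldots,(P_r,q_r)$ to verify that the standard/non-standard discrepancy survives intact at the outermost level. I expect this to follow the template established by Etnyre--Honda \cite{[EH1]} for the $(2,3)$ torus knot, now amplified across the iterated structure permitted by Theorem~\ref{main theorem}, with the arithmetic progression of admissible $k$ emerging from a congruence condition that forces the rotation-number shift and the Thurston--Bennequin shift to cancel.
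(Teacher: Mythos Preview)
Your high-level shape---build two Legendrians of the cable from a thickenable and a non-thickenable solid torus representing $K_r$, match their classical invariants, then argue their transverse push-offs are distinct---is correct, but several of the load-bearing details are off.

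First, the two solid tori in play do \emph{not} have different boundary slopes. The cabling slope $(-\chi(K_r),k+1)$ in the $\mathcal{C}$ framing is exactly $-\frac{k+1}{A_rk+B_r}$ in the $\mathcal{C}'$ framing, which is the boundary slope of the non-thickenable $N_r^k$. So the cable is realized as a \emph{Legendrian divide} on $\partial N_r^k$, not a ruling curve. The comparison torus $\widehat{N}_r$ has the \emph{same} boundary slope but sits inside a basic slice and thickens to slope $-\frac{1}{A_r-1}$; again the cable appears as a Legendrian divide. Your picture of ``standard'' versus ``non-standard'' tori with different slopes, and cables as ruling curves on each, does not match what actually happens.

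Second, because both Legendrians are divides on tori of the same slope, they are both at $\overline{tb}=A_{r+1}$, and a direct rotation-number calculation (using the rotation data on $\partial N_r^k$ from the inductive hypothesis, and on $\partial\widehat{N}_r$ from Lemma~\ref{sliceofmtnrange}) shows they have \emph{identical} rotation numbers $\pm k(A_r-B_r)$. There is no discrepancy to balance, and no arithmetic progression is needed for that purpose. The infinite set of admissible $k$ arises instead from the requirement that $N_r^k$ have exactly two dividing curves: one takes $k+1$ prime and larger than $A_r-B_r$ so that $\gcd(k+1,A_r-B_r)=1$.

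Third, and most seriously, your non-isotopy argument is missing its key external input. The paper uses the Etnyre--Van Horn-Morris theorem \cite{[EV]} that a fibered knot supporting $\xi_{std}$ has a unique transverse class at $\overline{sl}$. Since $\widehat{K}_{r+1}$ is such a knot, the \emph{positive} push-offs $T_+(L_{r+1}^-)$ and $T_+(\widehat{L}_{r+1}^-)$ (each a dividing curve on its respective torus, each at $\overline{sl}$) are transversely isotopic; after this isotopy the two tori share one dividing curve $\gamma_+$. If the \emph{negative} push-offs were also transversely isotopic, one could further arrange the tori to share both dividing curves, and then a bypass-attachment argument (modeled on Lemma~6.8 of \cite{[EH1]}) forces $\widehat{N}_r$ to be non-thickenable---a contradiction. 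Your sketch (``stabilizing down to a common Legendrian representative \ldots inside a contact solid torus'') does not capture this mechanism and, without the \cite{[EV]} input to anchor one dividing curve first, there is no evident way to make the contradiction go through.
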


To state our corollary to Theorem \ref{main theorem}, recall that if $K$ is a fibered knot, then there is an associated open book decomposition of $S^3$ that supports a contact structure, denoted $\xi_K$ (see \cite{[E],[TW]}).  Iterated torus knots are fibered knots, and Hedden has shown that the subclass of iterated torus knots where each iteration is a positive cabling, i.e. $P_i > 0$ for all $i$, is precisely the subclass of iterated torus knots where $\xi_{K_r}$ is isotopic to $\xi_{std}$ \cite{[He1]}.  We thus obtain the following corollary:

\begin{corollary}
An iterated torus knot $K_r$ fails the UTP if and only if $\xi_{K_r} \cong \xi_{std}$.
\end{corollary}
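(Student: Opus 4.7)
The plan is to obtain the corollary as an immediate consequence of chaining Theorem~\ref{main theorem} with Hedden's characterization of iterated torus knots whose supported contact structure is standard. Both of these are biconditionals pinned on the same combinatorial condition---namely, that every iteration be a positive cabling ($P_i > 0$ for all $i$)---so combining them produces the desired equivalence without any additional contact-geometric work.

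More precisely, I would first invoke Theorem~\ref{main theorem} to rewrite the statement ``$K_r$ fails the UTP'' as the arithmetic condition ``$P_i > 0$ for $1 \leq i \leq r$.'' I would then invoke Hedden's theorem from \cite{[He1]}, recalled in the paragraph preceding the corollary, which identifies the subclass of iterated torus knots with $\xi_{K_r}$ isotopic to $\xi_{std}$ as exactly those with $P_i > 0$ for all $i$; since iterated torus knots are fibered, $\xi_{K_r}$ is well-defined via the associated open book decomposition as recalled in \cite{[TW]}. Chaining these two biconditionals through their shared combinatorial characterization then yields the stated equivalence.

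Since the substantive work has already been carried out in Theorem~\ref{main theorem} (the main result of this note) and in Hedden's paper, no further obstacle presents itself here: the corollary is essentially a translation of the UTP classification into the language of contact structures supported by open book decompositions. The only point requiring attention is that the conventions for iterated torus knot parameters (in particular, the sign convention for $P_i$ in the standard preferred framing) must agree in Theorem~\ref{main theorem} and in \cite{[He1]}, so that the notion of ``positive cabling'' coincides in both statements; a brief remark confirming this compatibility of conventions is all that the writeup needs beyond citing the two results.
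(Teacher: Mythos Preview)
Your proposal is correct and matches the paper's approach exactly: the paper does not give a separate proof of the corollary, but simply states it after the paragraph recalling Hedden's result from \cite{[He1]}, making clear that chaining Theorem~\ref{main theorem} with Hedden's characterization through the common condition $P_i>0$ for all $i$ yields the equivalence.
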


We make a few remarks about these theorems.  First, it will be shown that these transversely non-simple cablings all have two Legendrian isotopy classes at the same rotation number and maximal Thurston-Bennequin number $\overline{\tb}$, and thus they exhibit Legendrian non-simplicity at $\overline{\tb}$.  Second, in the class of iterated torus knots there are certainly more transversely non-simple cablings than those in Theorem \ref{second theorem}, as seen in \cite{[EH1],[ELT]}.  However, we present just the class of non-simple cablings in Theorem \ref{second theorem}, and leave a more complete classification as an open question.   

We now present a conjectural generalization of the above two theorems and corollary.  To this end, recall that Hedden has shown that for general fibered knots $K$ in $S^3$, $\xi_K \cong \xi_{std}$ precisely when $K$ is a fibered strongly quasipositive knot \cite{[He3]}; he also shows that for these knots, the maximal self-linking number is $\overline{sl}(K)=-\chi(K)$ \cite{[He2]}.  Furthermore, from the work of Etnyre and Van Horn-Morris \cite{[EV]}, we know that for fibered knots $K$ in $S^3$ that support the standard contact structure there is a unique transverse isotopy class at $\overline{sl}$.  In the present paper, all of these ideas are brought to bear on the class of iterated torus knots, and this motivates the following conjecture concerning general fibered knots:

\begin{conjecture}
Let $K$ be a fibered knot in $S^3$; then $K$ fails the UTP if and only if $\xi_K \cong \xi_{std}$, and hence if and only if $K$ is fibered strongly quasipositive.  Moreover, if a topologically non-trivial fibered knot $K$ fails the UTP, then it supports cablings that are transversely non-simple.
\end{conjecture}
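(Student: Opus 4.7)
The plan is to break the conjecture into three implications and attack each by adapting the tools developed in this paper for iterated torus knots, using open book theory as the substitute for the explicit cabling hierarchy. The first target is the easier implication $\xi_K \cong \xi_{std} \Ra K$ fails UTP: by Hedden \cite{[He3]}, the hypothesis is equivalent to $K$ being fibered strongly quasipositive, and Hedden also shows $\overline{sl}(K) = -\chi(K)$ in this case \cite{[He2]}. I would build a transverse representative realizing this bound from a quasipositive Seifert surface, and then adapt the UTP-failure argument of the present paper: if the standard neighborhood of a Legendrian approximation embedded on the fiber could be enlarged to a uniformly thick solid torus, the resulting boundary torus would carry dividing curves whose slope is incompatible with the open book framing, contradicting the fact that the page slope equals the Seifert framing. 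The key technical replacement is for the explicit slope calculations available in the iterated torus setting; here one needs arguments involving the monodromy of the open book acting on the fiber.

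For the converse $K$ fails UTP $\Ra \xi_K \cong \xi_{std}$, which I expect to be the main obstacle, I would argue contrapositively. If $\xi_K$ is not the standard contact structure, then the supporting open book has a monodromy that is not a positive product of Dehn twists, equivalently $K$ is not fibered strongly quasipositive. In that case one should be able to use negative stabilizations, or regions of the page on which the monodromy has negative twisting, to embed a product region large enough to house a uniformly thick solid torus around $K$. The difficulty here is that no analogue of the iterated cabling decomposition exists for arbitrary fibered knots, so the argument cannot proceed by induction on a cabling hierarchy. One would likely need to combine convex surface theory in the knot complement with Giroux's correspondence, possibly via bypass-manipulation arguments on the pages and input from the uniqueness results of Etnyre and Van Horn-Morris \cite{[EV]} at $\overline{sl}$.

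Finally, for the transverse non-simplicity of cablings, I would generalize Theorem \ref{second theorem} directly: given $K$ fibered strongly quasipositive and topologically non-trivial, construct a family of $(-\chi(K), k+1)$-cables using Legendrian representatives sitting on the two sides of a non-thickenable convex torus, and show that the resulting transverse push-offs share classical invariants but are distinguished by a contact-geometric invariant of the complement (for example, one coming from Heegaard Floer contact invariants or from the Giroux torsion of the thickest admissible neighborhood). The construction of Theorem \ref{second theorem} appears sufficiently geometric that once UTP-failure is established the same mechanism produces non-simple cablings, and the hard part here reduces once again to verifying that the relevant convex tori persist in the general fibered setting.
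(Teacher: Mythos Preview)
This statement is a \emph{conjecture} in the paper, not a theorem; the paper offers no proof of it and does not claim one. What the paper actually establishes is the special case of iterated torus knots: Theorem~\ref{main theorem} and Corollary~1.3 give the UTP $\Leftrightarrow \xi_K \cong \xi_{std}$ equivalence for that class, and Theorem~\ref{second theorem} gives the transversally non-simple cablings. The conjecture is then stated as a proposed generalization motivated by those results together with the cited work of Hedden and Etnyre--Van Horn-Morris. So there is no ``paper's own proof'' to compare your proposal against.

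As for the proposal itself, it is a reasonable outline of where one might look, but it is not a proof and you correctly identify the essential obstruction yourself: the arguments in this paper rely throughout on the explicit cabling hierarchy and the resulting slope arithmetic (the $A_r$, $B_r$, the nested $N_r^k$, the Seifert-fibered covers used to block thickening). None of that structure is available for a general fibered knot, and your suggested replacements---monodromy arguments on the page, bypass manipulations, Heegaard Floer invariants---are at this stage only pointers, not mechanisms. In particular, the step ``if a standard neighborhood could be thickened then the dividing slope is incompatible with the open book framing'' does not follow from anything established here or in the references; in the iterated case the non-thickening is proved by passing to an explicit finite cover where the fibers become Legendrian with controlled twisting, and there is no evident substitute for that in general. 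Treat your write-up as a research program for attacking an open problem rather than as a proof.
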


Our main tools will be convex surface theory and the classification of tight contact structures on solid tori and thickened tori.  Most of the results we use can be found in \cite{[EH1], [ELT], [H1], [H2], [L]}, and if we use a result from one of these works, it will be specifically referenced.  Moreover, subsections 2.2 through 2.4 in \cite{[ELT]} provide a nice summary of much of the needed background.

The plan of the note is as follows.  In \S \ref{sec:Defns} we recall definitions, notation, and identities used in \cite{[EH1],[L]}.  In \S \ref{sec:strategy} we outline a strategy of proof of Theorem \ref{main theorem} that yields the statement of two key lemmas.  In \S \ref{sec:positivecablings} we prove the first lemma, and in \S \ref{sec:negativecablings} we prove the second lemma and complete the proof of Theorem \ref{main theorem}.  In \S \ref{sec:nonsimple} we prove Theorem \ref{second theorem}.

\bigskip

\noindent\textbf{Acknowledgements.}  We would like to thank William Menasco, John Etnyre, and B$\ddot{\textrm{u}}$lent Tosun for both their insight and interest.  This work was partially supported by QGM (Centre for Quantum Geometry of Moduli Spaces) funded by the Danish National Research Foundation.

\section{Definitions, notation, and identities}
\label{sec:Defns}

\subsection{Iterated torus knots}

{\em Iterated torus knots}, as topological knot types, can be defined recursively.  Let 1-iterated torus knots be simply torus knots $(p_1,q_1)$ with $p_1$ and $q_1$ co-prime nonzero integers, and $|p_1|, q_1 > 1$.  Here, as usual, $p_1$ is the algebraic intersection with a longitude, and $q_1$ is the algebraic intersection with a meridian in the preferred Seifert framing for a torus representing the unknot.  Then for each $(p_1,q_1)$ torus knot, take a tubular neighborhood $N((p_1,q_1))$; the boundary of this is a torus, and given a framing we can describe simple closed curves on that torus as co-prime pairs $(p_2,q_2)$, with $q_2 > 1$.  In this way we obtain all 2-iterated torus knots, which we represent as ordered pairs, $((p_1,q_1),(p_2,q_2))$.  Recursively, suppose the $(r-1)$-iterated torus knots are defined; we can then take tubular neighborhoods of all of these, choose a framing, and form the $r$-iterated torus knots as ordered $r$-tuples $((p_1,q_1),...,(p_{r-1},q_{r-1}),(p_r,q_r))$, again with $p_r$ and $q_r$ co-prime, and $q_r > 1$.

For ease of notation, if we are looking at a general $r$-iterated torus knot type, we will refer to it as $K_r$; a Legendrian representative will usually be written as $L_r$.

We will study iterated torus knots using two framings.  The first is the standard Seifert framing for a torus, where the meridian bounds a disc inside the solid torus, and we use the preferred longitude which bounds a surface in the complement of the solid torus.  We will refer to this framing as $\mathcal{C}$.  The second framing is a non-standard framing using a different longitude that comes from the cabling torus.  More precisely, to identify this non-standard longitude on $\partial N(K_r)$, we first look at $K_r$ as it is embedded in $\partial N(K_{r-1})$.  We take a small neighborhood $N(K_r)$ such that $\partial N(K_r)$ intersects $\partial N(K_{r-1})$ in two parallel simple closed curves.  These curves are longitudes on $\partial N(K_r)$ in this second framing, which we will refer to as $\mathcal{C'}$.  Note that this $\mathcal{C'}$ framing is well-defined for any cabled knot type.  Moreover, for purpose of calculations there is an easy way to change between the two framings, which will be reviewed below.

Given a simple closed curve $(\mu, \lambda)$ on a torus, measured in some framing as having $\mu$ meridians and $\lambda$ longitudes, we will say this curve has slope of $\frac{\lambda}{\mu}$; i.e., longitudes over meridians.  Therefore we will refer to the longitude in the $\mathcal{C'}$ framing as $\infty^\prime$, and the longitude in the $\mathcal{C}$ framing as $\infty$.  The meridian in both framings will have slope $0$.  These are the conventions used in \cite{[EH2],[ELT],[L]}.

We will also use a convention that meridians in the standard $\mathcal{C}$ framing, that is, algebraic intersection with $\infty$, will be denoted by upper-case $P$'s.  On the other hand, meridians in the non-standard $\mathcal{C}'$ framing, that is, algebraic intersection with $\infty'$, will be denoted by lower-case $p$'s.  These are the conventions used in \cite{[L]}.  Given a curve $L = (P,q)$ on a torus $\partial N$, there is then a relationship between the framings $\mathcal{C}'$ and $\mathcal{C}$ on $\partial N(L)$.  In terms of a change of basis, we can represent slopes $\lambda/\mu$ as column vectors and then get from a slope $\lambda/\mu'$, measured in $\mathcal{C}'$ on $\partial N(L)$, to a slope $\lambda/\mu$, measured in $\mathcal{C}$, by:

\begin{equation}
\label{changeofbasis}
\displaystyle \left(\begin{array}{cc}
														1 & Pq\\
														0 & 1\end{array}\right)\left(\begin{array}{c}
														\mu '\\
														\lambda\end{array}\right) = \left(\begin{array}{c}
														\mu\\
														\lambda\end{array}\right)\nonumber
										\end{equation}

In other words, $\mu = \mu ' + Pq\lambda$.

Given an iterated torus knot type $K_r = ((p_1,q_1),...,(p_r,q_r))$ where the $p_i$'s are measured in the $\mathcal{C}'$ framing, we define two quantities.  The two quantities are:

\begin{equation}
\label{ArBr}
\displaystyle A_r := \sum_{\alpha=1}^{r}p_\alpha \prod_{\beta=\alpha+1}^{r}q_\beta \prod_{\beta=\alpha}^{r}q_\beta \ \ \ \ \ \ \ \ B_r := \sum_{\alpha=1}^r \left(p_\alpha \prod_{\beta=\alpha+1}^r q_\beta \right) + \prod_{\alpha=1}^r q_\alpha
\end{equation}

Note here we use a convention that $\prod_{\beta=r+1}^{r}q_\beta := 1$.  Also, if we restrict to the first $i$ iterations, that is, to $K_i = ((p_1,q_1),...,(p_i,q_i))$, we have an associated $A_i$ and $B_i$.  For example, 

\begin{equation}
A_i := \sum_{\alpha=1}^{i}p_\alpha \prod_{\beta=\alpha+1}^{i}q_\beta \prod_{\beta=\alpha}^{i}q_\beta\nonumber
\end{equation}

From Section 3 in \cite{[L]} we obtain four useful identities which we will apply extensively throughout this note:

\begin{equation}
\label{identities}
A_r = q_r^2 A_{r-1} + p_rq_r \qquad  B_r = q_r B_{r-1} + p_r \qquad  P_r=q_rA_{r-1} + p_r \qquad  A_r = P_r q_r
\end{equation}

We conclude with a computation of the Euler characteristic for iterated torus knots obtained through positive cablings (see also Lemma 3.3 in \cite{[L]}).

\begin{lemma}
\label{eulerchar}
Suppose $K_r = ((P_1,q_1),...,(P_r,q_r))$ is an iterated torus knot where $P_i > 0$ for all $i$.  Then $-\chi(K_r)=A_r-B_r$.
\end{lemma}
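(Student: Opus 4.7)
The plan is to induct on $r$. The base case $r=1$ is the statement that a positive torus knot $K_1 = (P_1,q_1)$ (with $P_1,q_1 > 1$) has $-\chi(K_1) = P_1q_1 - P_1 - q_1$. This follows directly from the classical genus formula $g(K_1) = \tfrac{(P_1-1)(q_1-1)}{2}$, and from the expansions $A_1 = P_1q_1$ and $B_1 = P_1+q_1$ read off of the defining sums in (\ref{ArBr}), which together give $A_1 - B_1 = P_1q_1 - P_1 - q_1$ as required.

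For the inductive step, I will use the standard formula for the Euler characteristic of a cabled knot. Since $P_r>0$ and $q_r>1$, the $(P_r,q_r)$-cabling of $K_{r-1}$ in the preferred framing admits a minimal genus Seifert surface built from $q_r$ parallel pushoffs of a minimal Seifert surface for $K_{r-1}$, joined by the Seifert-algorithm surface for the $(P_r,q_r)$-torus knot pattern on $\partial N(K_{r-1})$. A count of the pieces (or, equivalently, the genus formula $g(K_r) = q_r\, g(K_{r-1}) + \tfrac{(P_r-1)(q_r-1)}{2}$ for cables with $P_r>0$) gives
\begin{equation}
-\chi(K_r) \;=\; q_r\bigl(-\chi(K_{r-1})\bigr) + P_r(q_r-1).
\end{equation}
Substituting the inductive hypothesis $-\chi(K_{r-1}) = A_{r-1} - B_{r-1}$ yields
\begin{equation}
-\chi(K_r) \;=\; q_rA_{r-1} - q_rB_{r-1} + P_rq_r - P_r.
\end{equation}

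It remains to reconcile this with $A_r-B_r$. From the identities in (\ref{identities}) we have $A_r = P_rq_r$ and $B_r = q_rB_{r-1} + p_r$, so
\begin{equation}
A_r - B_r \;=\; P_rq_r - q_rB_{r-1} - p_r.
\end{equation}
Comparing the two expressions, the equality $-\chi(K_r) = A_r - B_r$ reduces to $q_rA_{r-1} - P_r = -p_r$, which is precisely the identity $P_r = q_rA_{r-1} + p_r$ from (\ref{identities}). This closes the induction.

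The only nontrivial ingredient is the cabling formula for the Euler characteristic, which I would either cite as a standard fact (it is implicit in, e.g., the genus computation for satellite knots via Seifert surface construction) or justify in one line from the Seifert surface construction described above. Everything else is purely bookkeeping using the identities of equation (\ref{identities}), which in this setting do all the work.
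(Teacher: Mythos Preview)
Your proposal is correct and follows essentially the same approach as the paper: both argue by induction on $r$, using the same recursive formula $-\chi(K_r) = q_r(-\chi(K_{r-1})) + P_r(q_r-1)$ together with the identities in (\ref{identities}). The only cosmetic difference is that the paper extracts this recursion from a formula in \cite{[BW]}, whereas you obtain it from the standard cable genus formula; the algebra that follows is the same.
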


\begin{proof}
A formula for $\chi(K_r)$ is given at the end of the proof of Corollary 3 in \cite{[BW]}.  In the notation used in that paper, the formula is $\chi(K_r)=\prod_{i=1}^r p_i - \sum_{i=1}^r q_i(p_i-1)\prod_{j=i+1}^r p_j$, since in our case all the $e_i=1$ as we are cabling positively at each iteration.  However, note that our $(P_i,q_i)$ corresponds to $(q_i,p_i)$ in \cite{[BW]} for $i > 1$.  We thus obtain the equation

\begin{equation}
\displaystyle \chi(K_r) = P_1 \prod_{i=2}^r q_i - q_1(P_1 - 1)\prod_{i=2}^r q_i - \sum_{i=2}^r P_i(q_i-1)\prod_{j=i+1}^r q_j \nonumber
\end{equation} 

Examination of this formula for $\chi(K_r)$ yields the following recursive expression using our $P$'s and $q$'s: 

\begin{eqnarray}
\chi(K_r) &=& q_r \left[P_1 \prod_{i=2}^{r-1} q_i - q_1(P_1 - 1)\prod_{i=2}^{r-1} q_i - \sum_{i=2}^{r-1} P_i(q_i-1)\prod_{j=i+1}^{r-1} q_j \right] - P_r(q_r-1)\nonumber\\
 &=& q_r \chi(K_{r-1}) - P_r q_r + P_r\nonumber
\end{eqnarray}

Now for a positive torus knot $(P_1,q_1)$, we have $\chi=-A_1+B_1$, so we can inductively assume the lemma holds for $K_{r-1}$.  Thus using the recursive expression we have

\begin{eqnarray*}
\chi(K_r) &=& q_r \chi(K_{r-1}) - P_r q_r + P_r\\\nonumber
					&=& q_r(-A_{r-1}+B_{r-1}) - A_r + q_rA_{r-1} + p_{r-1}\\\nonumber
					&=&-A_r + B_r\nonumber
\end{eqnarray*}
\end{proof}

\subsection{Legendrian knots, convex tori, and the UTP}

Recall that for Legendrian knots embedded in $S^3$ with the standard tight contact structure, there are two classical invariants of Legendrian isotopy classes, namely the Thurston-Bennequin number, $\tb$, and the rotation number, $\rot$.  For a given topological knot type, if the ordered pair $(\rot,\tb)$ completely determines the Legendrian isotopy classes, then that knot type is said to be {\em Legendrian simple}.  For transverse knots there is one classical invariant, the self-linking number $sl$; for a given topological knot type, if the value of $sl$ completely determines the transverse isotopy classes, then that knot type is said to be {\em transversely simple}.  For a given topological knot type, if we plot Legendrian isotopy classes at points $(\rot,\tb)$, we obtain a plot of points that takes the form of a {\em Legendrian mountain range} for that knot type.

We will be examining Legendrian knots which are embedded in convex tori.  Recall that the characteristic foliation induced by the contact structure on a convex torus can be assumed to have a standard form, where there are $2n$ parallel {\em Legendrian divides} and a one-parameter family of {\em Legendrian rulings}.  Parallel push-offs of the Legendrian divides gives a family of $2n$ {\em dividing curves}, referred to as $\Gamma$.  For a particular convex torus, the slope of components of $\Gamma$ is fixed and is called the {\em boundary slope} of any solid torus which it bounds; however, the Legendrian rulings can take on any slope other than that of the dividing curves by Giroux's Flexibility Theorem \cite{[G]}.  A {\em standard neighborhood} of a Legendrian knot $L$ will have two dividing curves and a boundary slope of $1/\tb(L)$.

We can now state the definition of the {\em uniform thickness property} as given by Etnyre and Honda \cite{[EH1]}.  For a knot type $K$, define the {\em contact width} of $K$ to be

\begin{equation}
w(K)=\textrm{sup}\frac{1}{\textrm{slope}(\Gamma_{\partial N})}
\end{equation}

In this equation the $N$ are solid tori having representatives of $K$ as their cores; slopes are measured using the Seifert framing where the longitude has slope $\infty$; the supremum is taken over all solid tori $N$ representing $K$ where $\partial N$ is convex.  Any knot type $K$ satisfies the inequality $\overline{\tb}(K) \leq w(K) \leq \overline{\tb}(K)+1$.  A knot type $K$ then {\em satisfies the uniform thickness property (UTP)} if the following hold:

\begin{itemize}
\item[1.] $\overline{\tb}(K)=w(K)$, where $\overline{\tb}$ is the maximal Thurston-Bennequin number for $K$.
\item[2.] Every solid torus $N$ representing $K$ can be thickened to a standard neighborhood of a maximal $\overline{\tb}$ Legendrian knot.
\end{itemize} 

A solid torus $N$ {\em fails to thicken} if for all $N' \supset N$, we have $\textrm{slope}(\Gamma_{\partial N'}) = \textrm{slope}(\Gamma_{\partial N})$.  Thus one of the ways a knot type $K$ may fail the UTP is if it is represented by a solid torus $N$ which fails to thicken, and such that $\textrm{slope}(\Gamma_{\partial N}) \neq 1/\overline{\tb}(K)$.

%For a topological knot type $K$, if $N$ is a solid torus having a representative of $K$ as its core and convex boundary, then $N$ {\em fails to thicken} if for all $N' \supset N$, we have $\textrm{slope}(\Gamma_{\partial N'}) = \textrm{slope}(\Gamma_{\partial N})$.

Given a Legendrian curve $L = (P,q)$ on a convex torus $\partial N$, we define $t$ to be the twisting of the contact planes along $L$ with respect to the $\mathcal{C}'$ framing on $\partial N(L)$; in this case, equation 2.1 in \cite{[EH1]} gives us:  
														
\begin{equation}
\label{5}
\tb(L) = Pq + t(L)
\end{equation}

Observe that $t(L)$ is also the twisting of the contact planes with respect to the framing given by $\partial N$, and so is equal to $-1/2$ times the geometric intersection number of $L$ with $\Gamma_{\partial N}$.  $\overline{t}$ will denote the maximal twisting number with respect to this framing.

We also had two definitions introduced in \cite{[L]} that will be useful in this note.

\begin{definition}
{\em Let $N$ be a solid torus with convex boundary in standard form, and with $\textrm{slope}(\Gamma_{\partial N})=a/b$ in some framing.  If $|2b|$ is the geometric intersection of the dividing set $\Gamma$ with a longitude ruling in that framing, then we will call $a/b$ the {\em intersection boundary slope}}.
\end{definition}

Note that when we have an intersection boundary slope $a/b$, then $2\textrm{gcd}(a,|b|)$ is the number of dividing curves.

\begin{definition}
{\em For $r \geq 1$ and positive integer $k$, define $N_r^k$ to be any solid torus representing $K_r$ with intersection boundary slope of $-(k+1)/(A_rk+B_r)$, as measured in the $\mathcal{C}'$ framing.  Also define the integer $n_r^k :=\textrm{gcd}((k+1),(A_rk+B_r))$.}
\end{definition}

Note that $N_r^k$ has $2n_r^k$ dividing curves.  Note also that the above definition is only for $k \geq 1$.  However, we will also define $N_r^0$ to be a standard neighborhood of a $\overline{\tb}(K_r)$ representative, and thus have this as the $k=0$ case.  

\begin{remark}
\label{eulercharremark}
We will be particularly interested when $K_r$ is an iterated torus knot obtained from positive cablings; in this case, note that after doing a change of coordinates from the $\mathcal{C}'$ framing to the $\mathcal{C}$ framing, one obtains that the intersection boundary slope of $N_r^k$ is $(k+1)/(A_r-B_r)$, or in other words, by Lemma \ref{eulerchar}, $-(k+1)/\chi(K_r)$.  Thus $\Gamma_{\partial N_r^k}$ intersects the Seifert longitude exactly $2(-\chi(K_r))$ times, regardless of what $k$ is; this will be vital for our arguments. 
\end{remark}

Finally, recall that if $\mathcal{A}$ is a convex annulus with Legendrian boundary components, then dividing curves are arcs with endpoints on either one or both of the boundary components.  Dividing curves that are boundary parallel are called {\em bypasses}; an annulus with no bypasses is said to be {\em standard convex}.

\subsection{Twist number lemma and the Farey tessellation}  The following lemma, due to Honda \cite{[H1]}, will play a role in this work.

\begin{lemma}[Twist number lemma, Honda]
\label{twistnumber}
Let $L$ be a Legendrian knot with twisting $n$.  Let $r$ be the slope of a Legendrian ruling curve on $\partial N(L)$.  If there exists a bypass attached along this ruling curve, and $1/r \geq (n+1)$, then passing through the bypass yields a Legendrian curve, with larger twisting, which is isotopic (but not Legendrian isotopic) to $L$.
\end{lemma}

This lemma can be thought of as a corollary to the following proposition, also due to Honda \cite{[H1]}, which describes how slopes of dividing curves change due to bypasses attached to convex tori.  Recall that fractional slopes can be placed on the boundary of the Poincar$\acute{\textrm{e}}$ disk $\mathbb{D}$ using the {\em Farey tessellation}, where two slopes with intersection number one are connected by an arc in the Farey tessellation -- see subsection 2.2.3 in \cite{[ELT]} for a complete discussion.  In the following proposition, the torus $T$ can be thought of as inheriting an orientation from the solid torus which it bounds.

\begin{proposition}[Honda]
\label{changingslopes}
Let $T$ be a convex torus in standard form with $|\Gamma_{T}|=2,$
dividing slope $s$ and ruling slope $r\not=s.$ Let $D$ be a bypass for
$T$ attached to the front of $T$ along a ruling curve. Let $T'$ be the
torus obtained from $T$ by attaching the bypass $D.$ Then
$|\Gamma_{T'}|=2$ and the dividing slope $s'$ of $\Gamma_{T'}$ is
determined as follows: let $[r,s]$ be the arc on $\partial\mathbb{D}$
running from $r$ counterclockwise to $s,$ then $s'$ is the point in
$[r,s]$ closest to $r$ with an edge to $s.$

If the bypass is attached to the back of $T$ then the same algorithm
works except one uses the interval $[s,r]$ on
$\partial\mathbb{D}$.
\end{proposition}

Thus, note that when {\em thickening} a solid torus $N$, boundary slopes change in a {\em clockwise} manner on $\partial\mathbb{D}$; and when {\em thinning} $N$, slopes change in a {\em counterclockwise} manner.  Also, note that the boundary slope of $0$ cannot be realized when the contact structure is tight.  However, given a tight solid torus $N$ with boundary slope $s$, and given $s'$ a rational slope somewhere in the interval $(s,0)$ obtained by going counterclockwise from $s$ to $0$, then there exists a solid torus $N' \subset N$ with boundary slope $s'$ (see \cite{[H1]}).

\subsection{Imbalance Principle}
As we see that bypasses are useful in changing dividing curves on a surface, we mention a standard way to try to find them called the Imbalance Principle. Suppose that $T$ and $T'$ are two disjoint convex tori and $\cA$ is a convex annulus whose interior is disjoint from $T$ and $T'$, but whose boundary is Legendrian with one component on each surface. If $|\Gamma_T \cap \partial \cA|>|\Gamma_{T'} \cap \partial \cA|$ then there will be a bypass on $\cA$ along the $T$-edge.

\subsection{Universally tight contact structures}  Recall that a contact structure $\xi$ on a 3-manifold $M$ is said to be {\em overtwisted} if there exists an overtwisted disc, and a contact structure is {\em tight} if it is not overtwisted.  Moreover, one can further analyze tight contact 3-manifolds $(M,\xi)$ by looking at what happens to $\xi$ when pulled back to the universal cover $\widetilde{M}$ via the covering map $\pi:\widetilde{M} \rightarrow M$.  In particular, if the pullback of $\xi$ remains tight, then $(M,\xi)$ is said to be {\em universally tight}.

The classification of universally tight contact structures on solid tori is known from the work of Honda.  Specifically, from Proposition 5.1 in \cite{[H1]}, we know there are exactly two universally tight contact structures on $S^1 \times D^2$ with boundary torus having two dividing curves and slope $s < -1$ in some framing.  These are such that a convex meridional disc has boundary-parallel dividing curves that separate half-discs all of the same sign, and thus the two contact structures differ by $-id$.  (If $s=-1$, there is only one tight contact structure, and it is universally tight.)

Also from the work of Honda, we know that if $\xi$ is a contact structure which is everywhere transverse to the fibers of a circle bundle $M$ over a closed oriented surface $\Sigma$, then $\xi$ is universally tight.  This is the content of Lemma 3.9 in \cite{[H2]}, and such a transverse contact structure is said to be {\em horizontal}.

\subsection{Transverse push-offs of Legendrian knots}  Given a Legendrian knot $L$, recall that there are well-defined {\em positive and negative transverse push-offs}, denoted by $T_+(L)$ and $T_-(L)$, respectively.  Moreover, the self-linking numbers of these transverse push-offs are given by the formula $sl(T_{\pm}(L))=\tb(L)\mp \rot(L)$.

\section{Strategy of proof for Theorem \ref{main theorem}}
\label{sec:strategy}

In this section we present a strategy of proof for Theorem \ref{main theorem}.  We begin with a theorem that in previous works has in effect been proved, but not stated.  In this theorem $K$ is a knot type and $K_{(P,q)}$ is the $(P,q)$-cabling of $K$.

\begin{theorem}[Etnyre-Honda, L.]
\label{UTP preserved}
If $K$ satisfies the UTP, then $K_{(P,q)}$ also satisfies the UTP.
\end{theorem}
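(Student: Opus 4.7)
The plan is to reduce the UTP for $K' = K_{(P,q)}$ to the UTP for $K$ via convex surface theory on the cabling torus. The key observation is that any solid torus $N'$ representing $K'$ is contained, after isotopy, in a solid torus neighborhood $N(K)$ of some representative of $K$, and that Legendrian representatives of $K'$ can be realized as rulings on the convex boundary of such an $N(K)$.

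For the first UTP condition, $\overline{tb}(K') = w(K')$, I would use UTP for $K$ to take a standard neighborhood $N(L)$ of a $\overline{tb}(K)$-representative $L$ of $K$ and realize $L'$ as a $(P,q)$-Legendrian ruling on $\partial N(L)$; equation (\ref{5}) then computes the corresponding $tb(L')$ explicitly, giving the candidate value. Conversely, given any convex solid torus $N'$ whose core represents $K'$, embed $N'$ in a convex $N(K)$ and thicken $N(K)$ via UTP of $K$. Combined with the change of framing between $\mathcal{C}$ and $\mathcal{C}'$ described in \S 2.1, this forces $1/\text{slope}(\Gamma_{\partial N'})$ to be bounded by the same explicit value, yielding equality.

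For the thickening condition, suppose for contradiction that some convex $N'$ representing $K'$ fails to thicken. Isotope $N'$ inside a convex $N(K)$, and use UTP of $K$ to thicken $N(K)$ to a standard neighborhood of a $\overline{tb}(K)$-representative $L$. In the toric annulus $N(K) \setminus N'$ I would take a convex annulus $\mathcal{A}$ joining $\partial N'$ to $\partial N(K)$ with Legendrian boundary components of slope corresponding to the $(P,q)$ cable direction. Either $\mathcal{A}$ is standard convex, in which case edge-rounding along $\partial N'$ produces a convex torus of strictly larger slope inside $N(K)$---directly thickening $N'$; or $\mathcal{A}$ contains a boundary-parallel dividing arc ending on $\partial N'$, yielding a bypass which, by Honda's classification of tight structures on thickened tori, also thickens $N'$ toward a standard neighborhood of a $\overline{tb}(K')$-representative. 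Either alternative contradicts the non-thickening hypothesis.

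The main obstacle is choosing the ambient $N(K)$ and its UTP-thickening so that the slope of $\Gamma_{\partial N(K)}$, translated via the change-of-framing matrix from \S 2.1, is simultaneously compatible with the slope of $\Gamma_{\partial N'}$ and with the $(P,q)$ boundary slope of $\partial \mathcal{A}$; without this compatibility, the convex annulus $\mathcal{A}$ may fail to carry useful bypass information. A secondary delicacy is that when $\partial N'$ carries more than two dividing curves, bypass extraction via $\mathcal{A}$ must be iterated, appealing to Honda's analysis of tight contact structures on thickened tori cited in \S 2.3 to ensure the resulting thickening genuinely lands inside a standard neighborhood of a $\overline{tb}(K')$-representative.
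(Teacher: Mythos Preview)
The paper does not give a self-contained argument here: it simply splits into the cases $P/q < w(K)$ and $P/q > w(K)$ and cites, respectively, Theorem~1.3 of \cite{[EH1]} and the proofs of Theorem~3.2 of \cite{[EH1]} together with Theorem~1.1 of \cite{[L]}. Your proposal attempts a direct proof, and its general shape---build an $N(K)$ around $N'$, thicken via the UTP for $K$, then run a convex-annulus and bypass analysis---is indeed the mechanism underlying those cited arguments in the case $P/q > w(K)$ (compare the later Lemma~\ref{nonthickening inductive step}, which recapitulates that mechanism).

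There is, however, a genuine gap: you treat the two cases uniformly, and they behave differently. When $P/q < w(K)$, the maximal-$tb$ representative of $K'$ is realized as a \emph{Legendrian divide} on a convex torus of slope $q/P$ sitting strictly inside a standard neighborhood of a $\overline{tb}(K)$ knot, with $\overline{t}(K')=0$ and $\overline{tb}(K')=Pq$; your ruling construction on $\partial N(L)$ produces a strictly smaller $tb$ and so does not give the correct candidate value. Moreover, in this regime the proof that every $N'$ thickens (Theorem~1.3 of \cite{[EH1]}) does not use an annulus from $\partial N'$ to an outer $\partial N(K)$ but rather an annulus from $\partial N'$ to \emph{itself} with $\infty'$ boundary, together with the Imbalance Principle; it is the UTP for $K$ applied to the resulting thickened torus that forces the required thickening of $N'$.

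A smaller issue, relevant even in the case $P/q > w(K)$: the solid torus $N(K)$ containing $N'$ is not obtained by an a priori isotopy but is \emph{constructed} as $N' \cup N(\cA) \cup N(L)$, where $L$ is a Legendrian representative of $K$ in $S^3\setminus N'$ with $tb$ maximized in that complement, and $\cA$ joins $\partial N(L)$ to $\partial N'$. The crucial bypass control is on the $\partial N(L)$-edge of $\cA$ (so that $L$ cannot be destabilized), not on the $\partial N'$-edge; once $\cA$ is standard convex one thickens $N(K)$ and takes a \emph{second} annulus $\widetilde{\cA}$ to carve out the thickened $N'$. Your single-annulus picture with ``edge-rounding along $\partial N'$ produces a convex torus of strictly larger slope'' does not quite encode this two-step construction.
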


\begin{proof}
The case where the cabling fraction $P/q < w(K)$ is the content of Theorem 1.3 in \cite{[EH1]}.  For the case where $P/q > w(K)$, the proof follows from examining the proofs of Theorem 3.2 \cite{[EH1]} and Theorem 1.1 in \cite{[L]} and observing that Legendrian simplicity of $K$ is not needed to preserve the UTP.
\end{proof}

An immediate application for our purposes is that if an iterated torus knot $K_r = ((P_1,q_1),\cdots,(P_r,q_r))$ satisfies the UTP, then $K_{r+1} = ((P_1,q_1),\cdots,(P_r,q_r),(P_{r+1},q_{r+1}))$ also satisfies the UTP.

With this theorem in mind, we will prove Theorem \ref{main theorem} by way of three lemmas, two of which combine in an induction argument.  For this purpose we make the following inductive hypothesis, which from here on we will refer to as {\em the inductive hypothesis}.\\

\noindent\textbf{Inductive hypothesis:}  Let $K_r = ((P_1,q_1),...,(P_r,q_r))$ be an iterated torus knot, as measured in the standard $\mathcal{C}$ framing.  The inductive hypothesis assumes that the following hold:

\begin{itemize}
	\item[1.] $P_i > 0$ for all $i$, where $1 \leq i \leq r$.  (Thus $A_i=P_iq_i > 0$ for all $i$ as well.)
	\item[2.] $0<\overline{\tb}(K_r)=w(K_r) \leq A_r$.  (Thus $-A_r < \overline{t}(K_r) \leq 0$.)
	\item[3.] Any solid torus $N_r$ representing $K_r$ thickens to some $N_r^k$ (including $N_r^0$ which is a standard neighborhood of a $\overline{\tb}$ representative).
	\item[4.] If $N_r$ fails to thicken then it is an $N_r^k$, and it has at least $2n_r^k$ dividing curves. 
	\item[5.] The candidate non-thickenable $N_r^k$ exist and actually fail to thicken for $k \geq C_r$, where $C_r$ is some positive integer that varies according to the knot type $K_r$.  Moreover, these $N_r^k$ that fail to thicken have contact structures that are universally tight, with convex meridian discs $D$ containing bypasses all of the same sign; i.e., the rotation number of meridian curves is $\rot(\partial D) = \pm k$.  Also, a Legendrian ruling preferred longitude on these $\partial N_r^k$ has rotation number zero for $k > 0$.\\
\end{itemize}

Another way of stating item 4 is that every solid torus $N_r$ is contained in some $N_r^k$, and if $N_r$ fails to thicken, then boundary slopes do not change in passing to the $N_r^k \supset N_r$, although the number of dividing curves may decrease.  Also, note that, by item 5, any $K_r$ which satisfies the inductive hypothesis fails the UTP.

We first observe that the inductive hypothesis is true for the base case of positive torus knots, as established in \cite{[ELT],[L]}.

\begin{lemma}
\label{basecasepositivetorusknots}
The inductive hypothesis is true for positive torus knots $K_1 = (P,q)$.
\end{lemma}

\begin{proof}
Clearly item 1 of the inductive hypothesis holds.  From \cite{[EH2]} we know that $0 < \overline{\tb}(K_1) = Pq-P-q < A_1 = Pq$; this proves part of item 2.  

The remaining part of 2 follows from Lemma 4.5 in \cite{[L]}, and items 3 and 4 hold from Lemma 4.3 in \cite{[L]} (see also Lemma 3.1 in \cite{[ELT]}).  We briefly recall the sketch of the proof of that lemma below, as we will be using similar ideas shortly in the induction step. 

The idea in Lemma 4.3 in \cite{[L]} was the following:  given a solid torus $N_1$ representing the positive torus knot $K_1$, take a neighborhood of a Legendrian Hopf link $N(L_1) \sqcup N(L_2)$ in its complement.  Then, in the complement of $N_1 \cup N(L_1) \cup N(L_2)$, join a $(P,q)$-curve on $\partial N(L_1)$ to a $(q,P)$-curve on $\partial N(L_2)$ with a standard convex annulus $\cA$ having no bypasses (this could be achieved after possibly destabilizing $L_1 \sqcup L_2$).  One could then calculate the intersection boundary slope of $-\partial (N(L_1) \cup N(L_2) \cup N(\cA))$ to be identical to one of the $N_1^k$.  This established item 3.  Then, in that same lemma, item 4 was shown by observing that if $N_1$ had the same boundary slope as an $N_1^k$, but with less than $2n_1^k$ dividing curves, then $N_1$ would in fact thicken.

Construction 3.2, and Lemmas 3.3 and 3.4 in \cite{[ELT]} then combine to establish item 5, using $C_1 = 1$.  Again, we include the ideas in those results below, as we will use similar arguments shortly in the induction step.

The idea in Construction 3.2 in \cite{[ELT]} was to take one of the universally tight $N_1^k$, with convex meridian discs having bypasses all of the same sign, and build $S^3$ with the tight contact structure around it.  Specifically, we joined two $\infty'$-longitudes on $\partial N_{1}^{k}$ by a standard convex annulus $\cA$, so that if we then let $R = N_{1}^{k} \cup N(\cA)$, we had that $R$ was diffeomorphic to $T^2 \times [0,1]$, with a $[0,1]$-invariant contact structure on $N(\cA)$.  The contact structure on $R$ could then be isotoped to be transverse to the fibers of $R$, hence a horizontal contact structure, and therefore universally tight.  With appropriate choice of dividing curves on $\cA$, we could then assure that the two toric boundaries of $R$ represented those of standard neighborhoods of our desired Legendrian Hopf link, and gluing in such neighborhoods gave us $S^3$ with the tight contact structure.  This showed that the $N_1^k$ exist.

The idea in Lemma 3.3 in \cite{[ELT]} was to show that the $N_1^k$ are non-thickenable by examining the complement $M_1^k = S^3 \setminus N_1^k$.  Specifically, since the positive torus knot $(P,q)$ was a fibered knot (with fiber $\Sigma$) with periodic monodromy, $M_1^k$ had a $Pq$-fold cover $\widetilde{M}_1^k \cong S^1 \times \Sigma$.  We then showed that the $S^1$ fibers in $\widetilde{M}_1^k$ could all be made Legendrian of the same (negative) twisting $-(A_1k+B_1)$.  We then assumed, for contradiction, that $N_1^k$ thickened, and showed this resulted in a new Legendrian, topologically isotopic to the $S^1$ fibers, with twisting $-t' > -(A_1k+B_1)$.  We then showed, after cutting $\Sigma$ into a polygon $P$ to obtain a solid torus $S^1 \times P$, that we could tile enough copies of $S^1 \times P$ together to enclose the Legendrian with twisting $-t'$ inside a standard neighborhood of a Legendrian with twisting $-(A_1k+B_1)$.  This was a contradiction, and showed that the $N_1^k$ failed to thicken.

Finally, Lemma 3.4 in \cite{[ELT]} computed rotation numbers. 
\end{proof}

Our second key lemma used in proving Theorem \ref{main theorem} is the following induction step, which, along with the base case of positive torus knots, will show that if the iterated torus knot $K_r = ((P_1,q_1), ..., (P_r,q_r))$ is such that $P_i > 0$ for all $i$, then $K_r$ fails the UTP.

\begin{lemma}
\label{indhyppreserved}
Suppose $K_r$ satisfies the inductive hypothesis, and $K_{r+1}$ is a cabling where $P_{r+1} > 0$; then $K_{r+1}$ satisfies the inductive hypothesis.
\end{lemma}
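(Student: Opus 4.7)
The plan is to verify each of the five clauses of the inductive hypothesis for $K_{r+1}$ in turn, leveraging the hypothesis for $K_r$ together with the cabling identities in (\ref{identities}). Clause 1 is immediate: $P_i > 0$ for $i \le r$ by the inductive hypothesis, $P_{r+1} > 0$ by assumption, and the identity $A_{r+1} = q_{r+1} P_{r+1}$ forces $A_{r+1} > 0$. The overall strategy for the substantive clauses is to realize every convex solid torus $N_{r+1}$ representing $K_{r+1}$ as sitting inside a convex solid torus $N_r$ representing $K_r$, and to transfer data (slopes, dividing-curve counts, sign data on bypasses) across the cabling annulus via the change of basis $\mathcal{C}' \to \mathcal{C}$ encoded in $\left(\begin{array}{cc} 1 & P_{r+1} q_{r+1} \\ 0 & 1 \end{array}\right)$, together with the classification of tight contact structures on thickened tori and solid tori.

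For clauses 2, 3, and 4, I would take an arbitrary convex $N_{r+1}$ and isotope it so that some cabling torus $\partial N_r$ meets $\overline{S^3 \setminus N_{r+1}}$ in a convex annulus $\mathcal{A}$ with Legendrian boundary. Any bypass on $\mathcal{A}$ thickens $N_{r+1}$; in the standard-convex case, the dividing curves on $\mathcal{A}$ together with the framing change determine $\mathrm{slope}(\Gamma_{\partial N_{r+1}})$, and I would feed the resulting $N_r$ into clauses 3 and 4 of the inductive hypothesis to conclude that either $N_{r+1}$ thickens to a standard neighborhood of a $\overline{tb}$-representative, or matches some $N_{r+1}^{k}$ with at least $2 n_{r+1}^{k}$ dividing curves. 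This gives clauses 3 and 4 simultaneously with the upper bound $w(K_{r+1}) \le A_{r+1}$. The remaining equality $\overline{tb}(K_{r+1}) = w(K_{r+1})$ then follows by realizing a maximal-$tb$ Legendrian representative explicitly as a ruling of slope $p_{r+1}/q_{r+1}$ on $\partial N_r^{k''}$ for a non-thickenable $N_r^{k''}$ supplied by clause 5 of the inductive hypothesis, and reading off its twisting via equation (\ref{5}).

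Clause 5 is the main obstacle. For each sufficiently large $k$, my plan is to choose $k''$ as a function of $k$ so that a standard neighborhood of an appropriate Legendrian ruling on $\partial N_r^{k''}$ realizes an $N_{r+1}^k$; the required computation of intersection boundary slope proceeds via the change of basis together with $A_{r+1} = q_{r+1} P_{r+1}$ and $B_{r+1} = q_{r+1} B_r + p_{r+1}$, and should give the prescribed value $-(k+1)/(A_{r+1} k + B_{r+1})$. Non-thickenability I would prove by contradiction: a strict thickening of $N_{r+1}^k$ would, via a convex annulus spanning between $\partial N_{r+1}^k$ and $\partial N_r^{k''}$, produce a bypass that thickens $N_r^{k''}$, contradicting the inductive hypothesis. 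Universal tightness of the contact structure on $N_{r+1}^k$ is inherited from the universally tight structure on $N_r^{k''}$ via pullback to a suitable cover, and the same-sign bypasses on a convex meridional disc of $N_{r+1}^k$ are induced from the corresponding same-sign bypasses on a convex meridional disc of $N_r^{k''}$ across the cabling annulus.

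Finally, the rotation-number claim for a preferred-longitude ruling on $\partial N_{r+1}^k$ with $k > 0$ follows by computing its rotation as a signed count of bypasses on a bounding convex surface, and observing that uniformity of sign together with the inductive hypothesis rotation data (which equals zero on $\partial N_r^{k''}$) forces the count to vanish. The hardest technical piece throughout is the careful bookkeeping of how bypasses reorganize under the cabling operation and the framing change $\mathcal{C}' \to \mathcal{C}$, which is where the convex-annulus toolkit developed in \cite{[EH1]} and \cite{[L]} does most of the heavy lifting.
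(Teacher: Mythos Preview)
Your outline tracks the paper's architecture, but there are two places where the plan as written would not go through.

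First, you do not separate the regimes $P_{r+1}/q_{r+1} > w(K_r)$ and $w(K_r) > P_{r+1}/q_{r+1} > 0$, and the paper handles these quite differently. In the first regime $\overline{t}(K_{r+1}) < 0$ and a maximal-$tb$ representative is a ruling on a torus of slope $1/w(K_r)$, roughly as you describe. In the second regime, however, $\overline{t}(K_{r+1}) = 0$ and the maximal-$tb$ representative is a Legendrian \emph{divide} on a torus of slope $q_{r+1}/p_{r+1}$, not a ruling on any $\partial N_r^{k''}$. More seriously, showing $\overline{t}(K_{r+1}) \le 0$ in this second regime does not fall out of the thickening bookkeeping you sketch: the paper (Lemma~5.7) argues by contradiction, assuming an $N_{r+1}$ with positive $\mathcal{C}'$-slope, building a thickened torus $R$ from it, decomposing $R$ into two basic slices, and showing that the required mixing of sign in their relative half-Euler classes is impossible inside a universally tight non-thickenable $N_r^k$. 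This genuinely uses the universal-tightness clause of the hypothesis for $K_r$ and is not subsumed by annulus/bypass counting.

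Second, your proposed non-thickenability argument for clause~5 has a gap. You claim that a strict thickening of $N_{r+1}^{k'}$ would, via a convex annulus to $\partial N_r^{k''}$, produce a bypass that thickens $N_r^{k''}$. But since $N_r^{k''}$ (with $k'' = k' q_{r+1}$) already fails to thicken, any thickening of $N_{r+1}^{k'}$ must take place \emph{inside} $N_r^{k''}$, and an annulus from the thickened torus to $\partial N_r^{k''}$ has no reason to carry a bypass on its outer edge. The paper (Lemma~5.6) instead analyzes $M = N_r^{k''} \setminus N_{r+1}^{k'}$ as a Seifert fibered space with one singular fiber, passes to its $q_{r+1}$-fold cover, which is a punctured disc times $S^1$ with all $S^1$ fibers Legendrian of twisting $-(A_{r+1}k' + B_{r+1})$, glues in standard neighborhoods to form a single solid torus, and then invokes the classification of tight contact structures on solid tori to rule out any isotopic knot of higher twisting. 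This covering argument, parallel to the one in Lemma~4.3 for positive torus knots, is the essential mechanism here and is not replaced by a direct bypass argument.
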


The main idea in the argument used to prove this lemma will be that since $K_r$ satisfies the inductive hypothesis, there is an infinite collection of non-thickenable solid tori whose boundary slopes form an increasing sequence converging to $-1/A_r$ in the $\mathcal{C}'$ framing (which is $\infty$ in the $\mathcal{C}$ framing).  As a consequence, it will be shown that cabling slopes with $P_{r+1} > 0$ in the $\mathcal{C}$ framing will have a similar sequence of non-thickenable solid tori.  

Our third key lemma is the following, which along with Theorem \ref{UTP preserved} and the fact that negative torus knots satisfy the UTP (see \cite{[EH1]}), will show that if at least one of the $P_i < 0$, then $K_r$ satisfies the UTP.

\begin{lemma}
\label{negativeUTP}
Suppose $K_r$ satisfies the inductive hypothesis, and $K_{r+1}$ is a cabling where $P_{r+1} < 0$; then $K_{r+1}$ satisfies the UTP.
\end{lemma}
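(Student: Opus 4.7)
The plan is to adapt the argument of Theorem 1.3 of \cite{[EH1]} (the case $P/q < w(K)$) to our setting, where $K_r$ fails the UTP but the cabling fraction $P_{r+1}/q_{r+1}$ is strictly negative. The key observation is that, under the inductive hypothesis, the non-thickenable solid tori $N_r^k$ have boundary slopes in $\mathcal{C}$ accumulating near $1/w(K_r)>0$, whereas the cabling slope $q_{r+1}/P_{r+1}$ in $\mathcal{C}$ is strictly negative (since $P_{r+1}<0$). Thus the cabling takes place in a slope range well separated from the failure of $K_r$ to thicken, and all convex tori relevant to the cabling construction can be chosen with boundary slopes in a region where $K_r$ thickens freely.

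First I would show $\overline{tb}(K_{r+1})=w(K_{r+1})=P_{r+1}q_{r+1}$. The upper bound $tb(L_{r+1})\le P_{r+1}q_{r+1}$ is immediate from (\ref{5}) together with $t(L_{r+1})\le 0$. For the lower bound, the inductive hypothesis furnishes a convex solid torus $N_r$ representing $K_r$ in standard form with exactly two dividing curves of slope equal to that of the cabling curve $K_{r+1}$; such a torus exists because the prescribed slope is negative and hence distinct from the slopes of all non-thickenable $N_r^k$, so convex surface techniques apply unobstructed. A Legendrian divide on $\partial N_r$ then represents $K_{r+1}$ as a Legendrian $L_{r+1}$ with $t(L_{r+1})=0$, yielding $tb(L_{r+1})=P_{r+1}q_{r+1}$.

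For the second UTP condition, I would take any solid torus $N_{r+1}$ representing $K_{r+1}$ with convex boundary, enlarge it along a cabling annulus to obtain $N_r \supset N_{r+1}$ representing $K_r$, and invoke the inductive hypothesis to thicken $N_r$ to some $N_r^k$. The complement $V=N_r^k \setminus N_{r+1}$ is a cable space whose tight contact structures are classified in \cite{[EH1]}. Inside $V$, I would locate a convex torus $T$ with dividing slope $q_{r+1}/P_{r+1}$; because this slope is negative, the solid torus $N_r' \subset N_r^k$ bounded by $T$ lies in the safe region and carries a universally tight contact structure of standard type. Inside $N_r'$, the argument of Theorem 1.3 of \cite{[EH1]} then applies verbatim to produce a sequence of bypasses on $\partial N_{r+1}$ that thicken it to a standard neighborhood of a maximal $tb$ representative of $K_{r+1}$.

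The main obstacle I anticipate is extracting the convex torus $T$ of slope $q_{r+1}/P_{r+1}$ inside $V$ when $N_r^k$ carries a nontrivial universally tight structure with meridional bypasses all of the same sign. To handle this I would use that the Farey-tessellation path from the slope of $\partial N_{r+1}$ up to $q_{r+1}/P_{r+1}$ lies entirely in negative slopes, whereas the problematic meridional bypasses of $N_r^k$ only affect slopes near $1/w(K_r)>0$. Negative-slope bypasses on $\partial N_{r+1}$ inside $V$ are therefore available, and a sequence of such bypass attachments yields the desired $T$, after which the remainder of the argument reduces to the UTP-preservation proof of \cite{[EH1]}.
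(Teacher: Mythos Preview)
Your core observation---that in the $\mathcal{C}'$ framing the cabling slope $q_{r+1}/p_{r+1}$ lies in $(-1/A_r,0)$ while every non-thickenable $N_r^k$ has slope below $-1/A_r$, so the cabling is ``shielded'' from the failure of the UTP---is exactly the key point the paper isolates. The paper's own proof is simply a pointer to steps~1 and~2 of Theorem~1.5 in \cite{[L]}, where this shielding argument is carried out in detail; your outline is in the same spirit. One small correction: the boundary slopes of the $N_r^k$ do not accumulate near $1/w(K_r)$ in $\mathcal{C}$---in $\mathcal{C}$ they are $(k+1)/(A_r-B_r)$ and run off to $+\infty$ (equivalently, they converge to $-1/A_r$ in $\mathcal{C}'$). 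This does not affect your separation argument, since the essential fact is only that they are all on the far side of $-1/A_r$ from the cabling slope.

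There is, however, a genuine gap. You assert that the upper bound $tb(L_{r+1})\le P_{r+1}q_{r+1}$ is ``immediate from (\ref{5}) together with $t(L_{r+1})\le 0$,'' but $t(L_{r+1})\le 0$ is precisely the nontrivial statement $\overline{t}(K_{r+1})\le 0$, and it is not automatic. Since $P_{r+1}/q_{r+1}<w(K_r)$, there \emph{is} a convex torus of the cabling slope inside $N_r^0$, so $\overline{t}\ge 0$; ruling out $\overline{t}>0$ requires an argument. This is exactly ``step~1'' of Theorem~1.5 in \cite{[L]} (and is reproduced in the present paper as Lemma~5.7 for the analogous Case~II): one assumes an $N_{r+1}$ with positive slope in $\mathcal{C}'$, builds a thickened torus $R$ via a $\infty'$-annulus, observes that $R$ decomposes into two basic slices with mixed signs, and then derives a contradiction by embedding $R$ in a universally tight $N_r^k$ supplied by the inductive hypothesis. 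Your sketch does not supply this step, and it cannot be recovered from the thickening argument you give afterward, since UTP conditions~1 and~2 are logically independent. Relatedly, you cannot invoke Theorem~1.3 of \cite{[EH1]} ``verbatim,'' as that theorem has the UTP for $K_r$ as a hypothesis; you need to rerun its proof inside a specific universally tight $N_r^k$, which is what \cite{[L]} does.
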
 

In the following \S \ref{sec:positivecablings} we prove Lemma \ref{indhyppreserved}, and in \S \ref{sec:negativecablings} we prove Lemma \ref{negativeUTP}.

\section{Positive cablings that fail the UTP}
\label{sec:positivecablings}

Now that we know that the base case holds for positive torus knots, we begin to prove Lemma \ref{indhyppreserved} -- for the whole of this section we will thus have that $P_{r+1} > 0$, $K_r$ satisfies the inductive hypothesis, and we work to show that $K_{r+1}$ satisfies the inductive hypothesis.  We will need to break the proof of Lemma \ref{indhyppreserved} into two cases, Case I being where $P_{r+1}/q_{r+1} > w(K_r)$, and Case II being where $w(K_r) > P_{r+1}/q_{r+1} > 0$.  However, we first note the following.  

\begin{lemma}
\label{thickeningorder}
Let $K_r$ be an iterated torus knot with $P_i > 0$ for all $i$. If $0 \leq k_1 < k_2$, then 
\begin{equation}
-\frac{k_1+1}{A_rk_1+B_r} < -\frac{k_2+1}{A_rk_2+B_r}\nonumber
\end{equation} 
\end{lemma}

\begin{proof}
Following Lemma \ref{eulerchar} and Remark \ref{eulercharremark}, in the standard $\mathcal{C}$ framing we have that $(k_1+1)/(A_r-B_r) < (k_2+1)/(A_r-B_r)$; changing coordinates to the $\mathcal{C}'$ framing yields $-(k_1+1)/(A_rk_1+B_r) < -(k_2+1)/(A_rk_2+B_r)$.
\end{proof}

We now directly address the two different cases in two different subsections.

\medskip

\subsection{Case I: $P_{r+1}/q_{r+1} > w(K_r)$.}  We work through proving items 2-5 in the inductive hypothesis via a series of lemmas.  The following lemma begins to address item 2.

\begin{lemma}
\label{overlinetbcalc}
If $P_{r+1}/q_{r+1} > w(K_r)$, then $\overline{\tb}(K_{r+1})=A_{r+1}-(P_{r+1}-q_{r+1}w(K_r)) > 0$.
\end{lemma}

\begin{proof}
The proof is similar to that of Lemma 3.3 in \cite{[EH1]} (note that our $A_{r+1} = P_{r+1}q_{r+1}$).  We first claim that $\overline{t}(K_{r+1}) < 0$.  If not, there exists a Legendrian $L_{r+1}$ with $t(L_{r+1})=0$ and a solid torus $N_r$ with $L_{r+1}$ as a Legendrian divide.  But then we would have a boundary slope of $P_{r+1}/q_{r+1} > w(K_r)$ in the $\mathcal{C}$ framing, which cannot occur.

So since $\overline{t}(K_{r+1}) < 0$, any Legendrian $L_{r+1}$ must be a ruling on a convex $\partial N_r$ with slope $0>s \geq 1/\overline{t}(K_r)$ in the $\mathcal{C}'$ framing.  But then if $s = - \lambda/\mu > 1/\overline{t}(K_r)$, we have that $t(L_r) = - (p_{r+1}\lambda + q_{r+1}\mu) < -\lambda(p_{r+1} - \overline{t}(K_r)q_{r+1}) \leq -(p_{r+1} - \overline{t}(K_r)q_{r+1})$.  This shows that $\overline{\tb}(K_{r+1})$ is achieved by a Legendrian ruling on a convex torus having slope $1/w(K_r)$ in the standard $\mathcal{C}$ framing.

Finally, note that $A_{r+1}-(P_{r+1}-q_{r+1}w(K_r)) = A_{r+1}-(q_{r+1}(A_r-w(K_r))+p_{r+1}) > A_{r+1}-(q_{r+1}^2A_r +p_{r+1}q_{r+1})=0$. 
\end{proof}

With the following lemma we prove that items 3 and 4 of the inductive hypothesis hold for $K_{r+1}$.

\begin{lemma}
\label{nonthickening inductive step}
If $P_{r+1}/q_{r+1} > w(K_r)$, let $N_{r+1}$ be a solid torus representing $K_{r+1}$, for $r \geq 1$.  Then $N_{r+1}$ can be thickened to an $N_{r+1}^{k'}$ for some nonnegative integer $k'$.  Moreover, if $N_{r+1}$ fails to thicken, then it has the same boundary slope as some $N_{r+1}^{k'}$, as well as at least $2n_{r+1}^{k'}$ dividing curves.
\end{lemma}

\begin{proof}
In this case, for the $\mathcal{C}'$ framing, we have either $p_{r+1} > 0$ or $q_{r+1}/p_{r+1} < 1/\overline{t}(K_r)$ (the latter being relevant only if $\overline{t}(K_r) < 0$); in other words, $q_{r+1}/p_{r+1}$ is clockwise from $1/\overline{t}(K_r)$ in the Farey tessellation.  The proof in this case is nearly identical to the proof of Lemma 4.4 in \cite{[L]}; we will include the details, however, as certain particular calculations differ.  Moreover, we will use modifications of this argument in Case II and thus will be able to refer to the details here.

Let $N_{r+1}$ be a solid torus representing $K_{r+1}$.  Let $L_r$ be a Legendrian representative of $K_r$ in $S^3 \backslash N_{r+1}$ and such that we can join $\partial N(L_{r})$ to $\partial N_{r+1}$ by a convex annulus $\mathcal{A}_{(p_{r+1},q_{r+1})}$ whose boundaries are $(p_{r+1},q_{r+1})$ and $\infty'$ rulings on $\partial N(L_{r})$ and $\partial N_{r+1}$, respectively.  Then topologically isotop $L_{r}$ in the complement of $N_{r+1}$ so that it maximizes $\tb$ over all such isotopies; this will induce an ambient topological isotopy of $\mathcal{A}_{(p_{r+1},q_{r+1})}$, where we still can assume $\mathcal{A}_{(p_{r+1},q_{r+1})}$ is convex.  A picture is shown in (a) in Figure \ref{fig:non-thickening1B}.  In the $\mathcal{C}'$ framing we will have $\textrm{slope}(\Gamma_{\partial N(L_{r})})=-1/m$ where $m \geq 0$, since $\overline{t}(K_{r})\leq 0$.  Now if $m=\overline{t}(K_{r})$, then there will be no bypasses on the $\partial N(L_{r})$-edge of $\mathcal{A}_{(p_{r+1},q_{r+1})}$, since the $(p_{r+1},q_{r+1})$ ruling would be at maximal twisting.  On the other hand, if $m < \overline{t}(K_{r})$, then there will still be no bypasses on the $\partial N(L_{r})$-edge of $\mathcal{A}_{(p_{r+1},q_{r+1})}$, since such a bypass would induce a destabilization of $L_{r}$, thus increasing its $\tb$ by one --  here we are using the twist number lemma, Lemma \ref{twistnumber} above.  To satisfy the conditions of this lemma, we are using the fact that either $p_{r+1} > 0$ or $q_{r+1}/p_{r+1} < 1/\overline{t}(K_r)$.  Furthermore, we can thicken $N_{r+1}$ through any bypasses on the $\partial N_{r+1}$-edge, and thus assume $\mathcal{A}_{(p_{r+1},q_{r+1})}$ is standard convex.

Now let $N_{r} := N_{r+1} \cup N(\mathcal{A}_{(p_{r+1},q_{r+1})}) \cup N(L_r)$.  Inductively we can thicken $N_r$ to an $N_r^k$ with intersection boundary slope $-(k+1)/(A_rk+B_r)$ where $k$ is minimized over all such thickenings (if we have $k=0$, then we will have $N_{r+1}$ thickening to a standard neighborhood of a knot at $\overline{\tb}$ -- see the proof of Theorem 1.1 in Section 2 in \cite{[L]}; so we can assume $k > 0$).  Then consider a convex annulus $\widetilde{\mathcal{A}}$ from $\partial N(L_{r})$ to $\partial N_{r}^k$, such that $\mathcal{\widetilde{A}}$ is in the complement of $N_{r}$ and $\partial \mathcal{\widetilde{A}}$ consists of $(p_{r+1},q_{r+1})$ rulings.  A picture is shown in (b) in Figure \ref{fig:non-thickening1B}.  By an argument identical to that used in Lemma 4.4 in \cite{[L]}, $\mathcal{\widetilde{A}}$ is standard convex; we briefly recall the details below for completeness. 

\begin{figure}[htbp]
\label{nonthickening1}
	\centering
		\includegraphics[width=.90\textwidth]{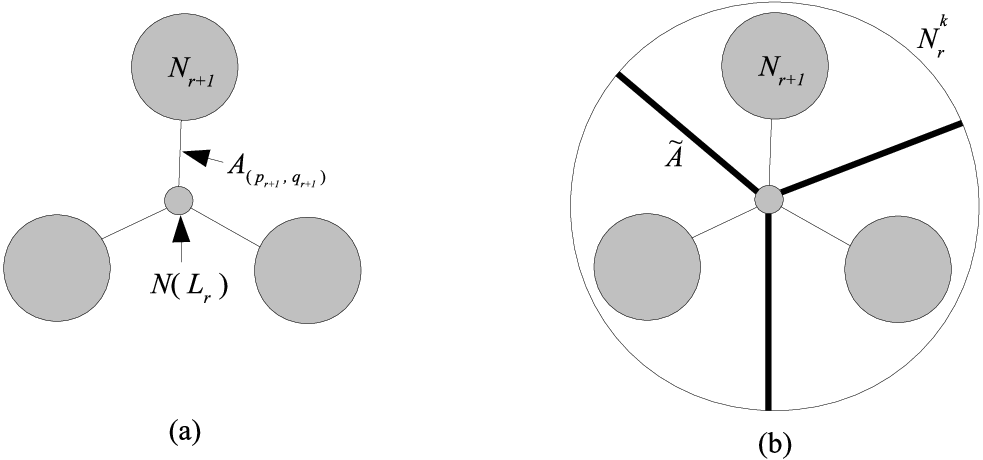}
	\caption{{\small $N_{r+1}$ is the larger solid torus in gray; $N(L_{r})$ is the smaller solid torus in gray.}}
	\label{fig:non-thickening1B}
\end{figure}

Certainly there are no bypasses on the $\partial N(L_{r})$-edge of $\mathcal{\widetilde{A}}$; furthermore, any bypasses on the $\partial N_r^k$-edge must pair up via dividing curves on $\partial N_r^k$ and cancel each other out as in part (a) of Figure \ref{nonthickening2}, for otherwise a bypass on $\partial N(L_r)$ would be induced via the annulus $\mathcal{\widetilde{A}}$ as in part (b) of Figure \ref{nonthickening2}.  As a consequence, allowing $N_r^k$ to thin inward through such bypasses does not change the boundary slope, but just reduces the number of dividing curves to less than $2n_r^k$.  But then inductively we can thicken this new $N_r^k$ to a smaller $k$-value, contradicting the minimality of $k$.  Thus $\mathcal{\widetilde{A}}$ is standard convex.

\begin{figure}[htbp]
	\centering
		\includegraphics[width=.80\textwidth]{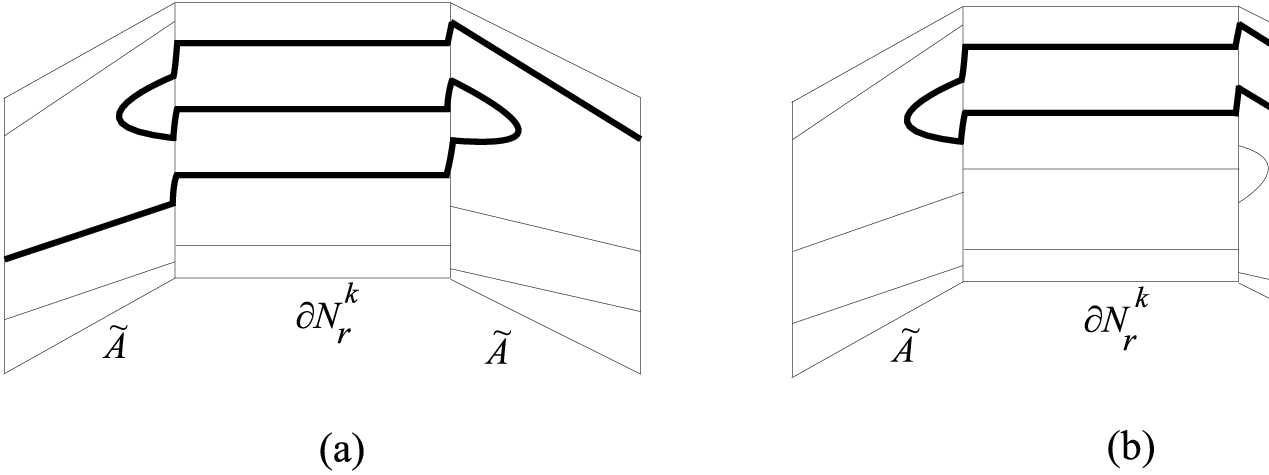}
	\caption{{\small Part (a) shows bypasses that cancel each other out after edge-rounding.  Part (b) shows a bypass induced on $\partial N(L_r)$ via $\widetilde{A}$.}}
	\label{nonthickening2}
\end{figure}

Now four annuli compose the boundary of a solid torus $\widetilde{N}_{r+1}$ containing $N_{r+1}$:  the two sides of a thickened $\mathcal{\widetilde{A}}$; $\partial N_{r}^k \backslash \partial \mathcal{\widetilde{A}}$; and $\partial N(L_{r}) \backslash \partial \mathcal{\widetilde{A}}$.  We can compute the intersection boundary slope of this solid torus.  To this end, recall that $\textrm{slope}(\Gamma_{\partial N(L_{r})})=-1/m$ where $m > 0$ ($m=0$ would be the $\overline{t}$ case which we have taken care of above).  To determine $m$ we note that the geometric intersection of $(p_{r+1},q_{r+1})$ with $\Gamma$ on $\partial N_{r}^k$ and $\partial N(L_{r})$ must be equal, yielding the equality

\begin{equation}
\label{positive}
p_{r+1} + mq_{r+1} = p_{r+1}k + p_{r+1} + q_{r+1}(A_{r}k + B_{r})
\end{equation}

These equal quantities are greater than zero, since $q_{r+1}/p_{r+1}$ is clockwise from $-1/m$ (and $-(k+1)/(A_rk+B_r)$) in the Farey tessellation -- we note here that this will yield $(A_{r+1}k'+B_{r+1}) > 0$ for the calculations below.  In the meantime, however, the above equation gives

\begin{equation}
m=p_{r+1}\frac{k}{q_{r+1}}+A_{r}k + B_{r}
\end{equation}

We define the integer $k':=k/q_{r+1}$.  We now choose $(p'_{r+1},q'_{r+1})$ to be a curve on these two tori such that $p_{r+1}q'_{r+1}-p'_{r+1}q_{r+1}=1$, and we change coordinates to a framing $\mathcal{C}''$ via the map $((p_{r+1},q_{r+1}),(p'_{r+1},q'_{r+1})) \mapsto ((0,1),(-1,0))$.  Under this map we obtain 
\begin{equation}
\textrm{slope}(\Gamma_{\partial N_{r}^k}) = \frac{q'_{r+1}(A_{r}k+B_{r})+p'_{r+1}(q_{r+1}k'+1)}{A_{r+1}k'+B_{r+1}}
\end{equation}

\begin{equation}
\textrm{slope}(\Gamma_{\partial N(L_{r})})=\frac{q'_{r+1}(p_{r+1}k' +A_{r}k+B_{r}) +p'_{r+1}}{A_{r+1}k'+B_{r+1}}
\end{equation}

We then obtain in the $\mathcal{C}'$ framing, after edge-rounding, that the intersection boundary slope of $\widetilde{N}_{r+1}$ is

\begin{eqnarray}
\label{equationarray}
\textrm{slope}(\Gamma_{\partial \widetilde{N}_{r+1}})&=&\frac{q'_{r+1}(A_{r}k+B_{r})+p'_{r+1}(q_{r+1}k'+1)}{A_{r+1}k'+B_{r+1}}\nonumber\\
																			&-&\frac{q'_{r+1}(p_{r+1}k' +A_{r}k+B_{r}) +p'_{r+1}}{A_{r+1}k'+B_{r+1}}\nonumber\\
																			&-&\frac{1}{A_{r+1}k'+B_{r+1}}\nonumber\\
																			&=&-\frac{k'+1}{A_{r+1}k'+B_{r+1}}
\end{eqnarray}

We remark here that in these particular edge-rounding calculations we are using the fact that the $q_{r+1}/p_{r+1}$ rulings on both $\partial N(L_{r})$ and $\partial N_{r}^k$ intersects the dividing curves {\em positively}, which is equivalent to saying that $q_{r+1}/p_{r+1}$ is clockwise from $-1/m$ and $-(k+1)/(A_rk+B_r)$ in the Farey tessellation -- this yields the $-1/(A_{r+1}k'+B_{r+1})$ summand in the calculation above.  This will be important to remember in Case II below.

However, sticking to the current case, this shows that any $N_{r+1}$ representing $K_{r+1}$ can be thickened to one of the $N_{r+1}^{k'}$, and if $N_{r+1}$ fails to thicken, then it has the same boundary slope as some $N_{r+1}^{k'}$.  We now show that if $N_{r+1}$ fails to thicken, and if it has the minimum number of dividing curves over all such $N_{r+1}$ which fail to thicken and have the same boundary slope as $N_{r+1}^{k'}$, then $N_{r+1}$ is actually an $N_{r+1}^{k'}$. 

To see this, as above we can choose a Legendrian $L_{r}$ that maximizes $\tb$ in the complement of such a non-thickenable $N_{r+1}$, and such that we can join $\partial N(L_{r})$ to $\partial N_{r+1}$ by a convex annulus $\mathcal{A}_{(p_{r+1},q_{r+1})}$ whose boundaries are $(p_{r+1},q_{r+1})$ and $\infty'$ rulings on $\partial N(L_{r})$ and $\partial N_{r+1}$, respectively.  Again we have no bypasses on the $\partial N(L_{r})$-edge, and in this case we have no bypasses on the $\partial N_{r+1}$-edge since $N_r$ fails to thicken and is at minimum number of dividing curves.

As above, let $N_{r}:=N_{r+1} \cup N(\mathcal{A}_{(p_{r+1},q_{r+1})}) \cup N(L_{r})$.  We claim this $N_{r}$ fails to thicken.  To see this, take a convex annulus $\mathcal{\widetilde{A}}$ from $\partial N(L_{r})$ to $\partial N_{r}$, such that $\mathcal{\widetilde{A}}$ is in the complement of $N_{r+1}$ and $\partial \mathcal{\widetilde{A}}$ consists of $(p_{r+1},q_{r+1})$ rulings.  We know $\mathcal{\widetilde{A}}$ is standard convex since the twisting is the same on both edges and there are no bypasses on the $\partial N(L_{r})$-edge.  A picture is shown in Figure \ref{fig:non-thickening3BB}.

\begin{figure}[htbp]
	\centering
		\includegraphics[width=0.45\textwidth]{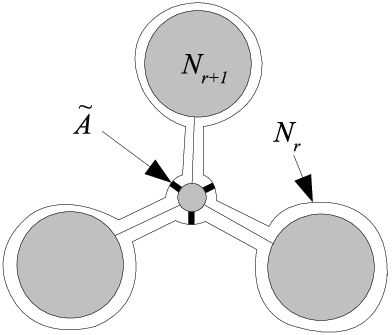}
	\caption{\small{Shown is a meridional cross-section of $N_{r}$.  The larger gray solid torus represents $N_{r+1}$; the smaller gray solid torus is $N(L_{r})$.}}
	\label{fig:non-thickening3BB}
\end{figure}

Now four annuli compose the boundary of a solid torus containing $N_{r+1}$:  the two sides of the thickened $\mathcal{\widetilde{A}}$, which we will call $\mathcal{\widetilde{A}}_+$ and $\mathcal{\widetilde{A}}_-$; $\partial N_{r} \backslash \partial \mathcal{\widetilde{A}}$, which we will call $\mathcal{A}_{r}$; and $\partial N(L_{r}) \backslash \partial \mathcal{\widetilde{A}}$, which we will call $\mathcal{A}_{L_{r}}$.  Any thickening of $N_{r}$ will induce a thickening of $N_{r+1}$ to $\widetilde{N}_{r+1}$ via these four annuli.

Suppose, for contradiction, that $N_{r}$ thickens outward so that $\textrm{slope}(\Gamma_{\partial N_{r}})$ changes.  Note that during the thickening, $\mathcal{A}_ {L_{r}}$ stays fixed.  We examine the rest of the annuli by breaking into two cases.

\textbf{Case 1:}  After thickening, suppose $\mathcal{\widetilde{A}}$ is still standard convex; that means both $\mathcal{\widetilde{A}}_+$ and $\mathcal{\widetilde{A}}_-$ are standard convex.  Since we can assume that after thickening $\mathcal{A}_{r}$ is still standard convex, this means that in order for $\textrm{slope}(\Gamma_{\partial N_{r}})$ to change, the holonomy of $\Gamma_{\mathcal{A}_{r}}$ must have changed.  But this will result in a change in $\textrm{slope}(\Gamma_{\partial N_{r+1}})$, since $\mathcal{A}_ {L_{r}}$ stays fixed and any change in holonomy of $\Gamma_{\mathcal{\widetilde{A}}_+}$ and $\Gamma_{\mathcal{\widetilde{A}}_-}$ cancels each other out and does not affect $\textrm{slope}(\Gamma_{\partial N_{r+1}})$.  Thus we would have a slope-changing thickening of $N_{r+1}$, which by hypothesis cannot occur.

\textbf{Case 2:}  After thickening, suppose $\mathcal{\widetilde{A}}$ is no longer standard convex.  Now note that there are no bypasses on the $\partial N(L_{r})$-edge of $\mathcal{\widetilde{A}}$; furthermore, any bypass for $\mathcal{\widetilde{A}}_+$ on the $\partial N_{r}$-edge must be cancelled out by a corresponding bypass for $\mathcal{\widetilde{A}}_-$ on the $\partial N_{r}$-edge as in part (a) of Figure \ref{nonthickening2}, so as not to induce a bypass on the $\partial N(L_{r})$-edge as in part (b) of the same figure.  But then again, in order for $\textrm{slope}(\Gamma_{\partial N_{r+1}})$ to remain constant, the holonomy of $\Gamma_{\mathcal{A}_{r}}$ must remain constant, and thus $\textrm{slope}(\Gamma_{\partial N_{r}})$ must also have remained constant, with just an increase in the number of dividing curves.

This proves the claim that $N_{r}$ does not thicken, and we therefore know that its boundary slope is $-(k+1)/(A_{r}k+B_{r})$.  Furthermore, we know the number of dividing curves is $2n$ where $n \geq n_r^k$.  Suppose, for contradiction, that $n > n_r^k$.  Then we know we can thicken $N_{r}$ to an $N_{r}^{k}$, and if we take a convex annulus from $\partial N_{r}$ to $\partial N_{r}^{k}$ whose boundaries are $(p_{r+1},q_{r+1})$ rulings, by the Imbalance Principle there must be bypasses on the $\partial N_{r}$-edge.  But these would induce bypasses off of $\infty'$ rulings on $N_{r+1}$, which by hypothesis cannot exist.  Thus $n = n_r^k$, and by a calculation as above we obtain that the intersection boundary slope of $N_{r+1}$ must be $-(k'+1)/(A_{r+1}k'+B_{r+1})$ for the integer $k'=k/q_{r+1}$.
\end{proof}

We now finish the proof of item 2 of the inductive hypothesis.

\begin{lemma}
If $P_{r+1}/q_{r+1} > w(K_r)$, then $w(K_{r+1})=\overline{\tb}(K_{r+1})$.
\end{lemma}

\begin{proof}
Using the above Lemma \ref{nonthickening inductive step}, we need to show that $1/\overline{t}(K_{r+1}) < -(k'+1)/(A_{r+1}k'+B_{r+1})$ for any candidate $N_{r+1}^{k'}$.  But changing to standard $\mathcal{C}$ coordinates, this means we need to show that $1/\overline{\tb}(K_{r+1}) < 2/(A_{r+1} - B_{r+1})$.  By Lemma \ref{overlinetbcalc}, this is true if and only if

\begin{equation}
\label{inequalitytoprove}
A_{r+1}-B_{r+1} < 2[A_{r+1}-(P_{r+1} - q_{r+1}w(K_r))]
\end{equation}

We know inductively that $1/w(K_r) < 2/(A_r-B_r)$.  We use this fact below, along with the identities in equation \ref{identities}, to prove inequality \ref{inequalitytoprove}.  We begin with the right hand side:

\begin{eqnarray}
2[A_{r+1}-(P_{r+1} - q_{r+1}w(K_r))] &=& 2q_{r+1}w(K_r) - 2P_{r+1} + 2A_{r+1}\nonumber \\
  &>& q_{r+1}A_r -q_{r+1}B_r - 2P_{r+1} + 2A_{r+1}\nonumber \\
  &=& q_{r+1}A_r -q_{r+1}B_r - 2(q_{r+1}A_r +p_{r+1}) + 2A_{r+1}\nonumber \\
  &=& -q_{r+1}A_r -q_{r+1}B_r - p_{r+1} -p_{r+1} +2A_{r+1}\nonumber \\
  &=&  -q_{r+1}A_r - B_{r+1} - p_{r+1} +2A_{r+1}\nonumber \\
  &=& A_{r+1} - B_{r+1} + [A_{r+1} - (q_{r+1}A_r + p_{r+1})]\nonumber \\
  &>& A_{r+1} - B_{r+1}\nonumber
\end{eqnarray}
\end{proof}

We conclude this subsection by proving item 5 of the inductive hypothesis, using a construction and two lemmas.  We begin with the construction, which shows that the candidate $N_{r+1}^{k'}$ exist for $k' \geq C_{r+1}$, where $C_{r+1}$ is some positive integer.

\bigskip

\begin{construction}
\label{main construction}
We know inductively that there exists a $C_r$ such that if $k\geq C_r$, then the $N_r^k$ exist and fail to thicken, and have convex meridian discs with bypasses all of the same sign.  So suppose $k/q_{r+1} \in \mathbb{N}$ for some $k \geq C_r$.  We will show that $N_{r+1}^{k'}$ exists for $k':=k/q_{r+1}$.  Then $C_{r+1}$ will be the least such $k/q_{r+1} \in \mathbb{N}$.

The idea is to build $S^3$.  We first take one of the two universally tight candidate $N_{r+1}^{k'}$, with intersection boundary slope $-(k'+1)/(A_{r+1}k'+B_{r+1})$, and with convex meridian discs having bypasses all of the same sign; thus the two possible contact structures on $N_{r+1}^{k'}$ differ by $-id$.  We then show that we can use such a $N_{r+1}^{k'}$ to build $N_r^{k'q_{r+1}}$, essentially running backwards the decomposition from Lemma \ref{nonthickening inductive step} above.  To this end, let $\cA$ be a standard convex annulus joining two $\infty'_{r+1}$-longitudes on $\partial N_{r+1}^{k'}$, so that if we then let $R = N_{r+1}^{k'} \cup N(\cA)$, we have that $R$ is diffeomorphic to $T^2 \times [0,1]$, with a $[0,1]$-invariant contact structure on $N(\cA)$.  Furthermore, we can think of $R$ as containing a horizontal annulus joining $T^2 \times \{0\}$ to $T^2 \times \{1\}$, and such that the original $\infty'_{r+1}$-longitudes on $\partial N_{r+1}^{k'}$ intersect this horizontal annulus $q_{r+1}$ times; thus, with an appropriate choice of $\infty'_r$-longitude for $T^2 \times \{i\}$, the original $\infty'_{r+1}$-longitudes on $\partial N_{r+1}^{k'}$ are now $(p_{r+1},q_{r+1})$ curves on $T^2 \times \{i\}$.  

We will thus think of $R$ as fibering over the horizontal annulus with fiber circles representing the knot type $K_{r+1}$.  For either choice of the two universally tight contact structures on $N_{r+1}^{k'}$, the contact structure on $R$ can be isotoped to be transverse to the fibers of $R$, while preserving the dividing set on $R$. Hence the contact structure is horizontal, and therefore universally tight.  Furthermore, with appropriately chosen dividing curves on $\cA$, we can obtain intersection boundary slopes (on the two boundary tori $T^2 \times \{0\}$ and $T^2 \times \{1\}$) of $-(k'q_{r+1}+1)/(A_rk'q_{r+1}+B_r)$ and $-1/(p_{r+1}k'+A_{r}k + B_{r}))$; i.e., the intersection boundary slopes of a $N_{r}^{k'q_{r+1}}$ and a Legendrian of twisting $-(p_{r+1}k'+A_{r}k + B_{r})$.

We now glue, onto the one side of $R$, a standard neighborhood of a Legendrian $L_r$ of twisting $-(p_{r+1}k'+A_{r}k + B_{r})$; we claim the resulting solid torus $N_r$ is one of the $N_r^{k'q_{r+1}}$.  To see this, look at a $q_{r+1}$-fold cover of $N_r$, and examine its convex meridian disc $D_r$ (which is also the same convex meridian disc $D_r$ for the $N_r$ downstairs).  The disc $D_r$ is formed by taking $q_{r+1}$ meridian discs from the $q_{r+1}$-copies of lifts of $N_{r+1}^{k'}$, and first banding them together via bands coming from the $[0,1]$-invariant $N(\cA)$, and then finally gluing in the convex meridian disc for the standard neighborhood of a Legendrian.  But now evaluating the relative Euler class (of $\xi$) on $D_r$, we note that these bands and the meridian disc for the standard neighborhood yield no obstruction, and thus we obtain $\pm k'q_{r+1}$, as each of the $q_{r+1}$ meridian discs from $N_{r+1}^{k'}$ yields $\pm k'$.  

We then know inductively that this $N_r^{k'q_{r+1}}$ (and hence $N_{r+1}^{k'}$) exists in $S^3$.

\raggedleft\qedsymbol
\end{construction}

\bigskip

We now show that the $N_{r+1}^{k'}$ coming from the above construction in fact fail to thicken.

\begin{lemma}
The $N_{r+1}^{k'}$ from Construction \ref{main construction} fail to thicken for $k' \geq C_{r+1}$. 
\end{lemma}

\begin{proof}
To show that $N_{r+1}^{k'}$ fails to thicken, by Lemmas \ref{thickeningorder} and \ref{nonthickening inductive step} it suffices to show that $N_{r+1}^{k'}$ does not thicken to any $N_{r+1}^{k''}$, where $k'' < k'$.  Inductively, we can assume $N_{r}^k$ fails to thicken for $k \geq C_r$; in particular, the $N_{r}^{k'q_{r+1}}$ that contains $N_{r+1}^{k'}$ fails to thicken.  So let $k = k'q_{r+1}$.  Then define $M_r^k = S^3 \backslash N_r^k$, and define $M_{r+1}^{k'}=S^3 \backslash N_{r+1}^{k'}$.

We first make some purely topological observations, which in the rest of this proof we will refer to as {\em the topological observations}.  We begin by observing that $K_{r+1}$ is a fibered knot, and has periodic monodromy -- see, for example, \cite{[A]}.  One way to see this is as follows.  We think of $K_{r+1}$ embedded on $\partial N_r$, and let $\Sigma_{r+1}$ be a Seifert surface for $K_{r+1}$.  Furthermore, we note that $\Sigma_{r+1}$ can be formed by taking $q_{r+1}$ copies of the Seifert surface $\Sigma_r$ for the Seifert longitude on $\partial N_r$, and $P_{r+1}$ copies of a meridian disc $D_r$ for $N_r$, and banding them together with $P_{r+1}q_{r+1}$ positive (half-twist) bands.  We then observe that if we take a slightly larger $N_r' \supset N_r$, there will be $q_{r+1}$ separating simple closed curves on $\Sigma_{r+1}$ that are in fact preferred Seifert longitudes for $\partial N_r'$, and thus bound Seifert surfaces $\Sigma_r$ for the knot $K_r$ in the complement of $N_r'$ (all $q_{r+1}$ of which are subsurfaces of $\Sigma_{r+1}$).  In fact, the monodromy for $\Sigma_{r+1}$ is reducible along these $q_{r+1}$ curves; that is, if we call $\Sigma_{r+1} \cap N_r' := \sigma_{r+1}$, the monodromy will take $\sigma_{r+1}$ to itself, and sweep out the interior of $N_r'$.  Moreover, the monodromy will restrict to being periodic on $\sigma_{r+1}$, of period $P_{r+1}q_{r+1}$, as repeated application of the monodromy cycles through the $P_{r+1}q_{r+1}$ bands.  Then, since positive torus knots have periodic monodromy, inductively we can assume that the exterior of $N_r'$ fibers periodically with the $q_{r+1}$ copies of the $\Sigma_r$'s.  As a result, there is a positive integer $m_{r+1}$ such that $\phi^{m_{r+1}} = id$ (where here $\phi$ is the $\Sigma_{r+1}$-monodromy), and such that $P_{r+1}q_{r+1}$ divides $m_{r+1}$.  

We return to contact topology, and now let $\Sigma_{r+1}$ be a Seifert surface for a preferred longitude on $\partial N_{r+1}^{k'}$; so $\Sigma_{r+1}$ is a surface of genus $g'$ with one boundary component.  As noted in the topological observations, there are $q_{r+1}$ separating simple closed curves on $\Sigma_{r+1}$ that are in fact preferred longitudes for $\partial N_r^k$, and thus bound Seifert surfaces $\Sigma_r$ for the knot $K_r$.  We will call the genus of such a Seifert surface $\Sigma_r$, $g$.  Also we will call $\Sigma_{r+1} \cap N_r^k :=\sigma_{r+1}$; so $\Sigma_{r+1} = \sigma_{r+1} \cup \left(\bigcup_{j=1}^{q_{r+1}} \Sigma_r^j\right)$.

We look at finite covers of $M_{r+1}^{k'}$ that are obtained by cutting $M_{r+1}^{k'}$ along $\Sigma_{r+1}$ and then cyclically stacking copies of these split-open $M_{r+1}^{k'}$.  We first look at a $P_{r+1}q_{r+1}$-fold cover obtained in this fashion, and, due to the topological observations above, focus in on the lift of the space $N_r^k \setminus N_{r+1}^{k'}$ which contains $\sigma_{r+1}$.  If we arrange that downstairs $\partial N_r^k$ has Legendrian rulings that are $(P_{r+1},q_{r+1})$ cables (which are $\infty_{r+1}^\prime$-rulings on $\partial N_{r+1}^{k'}$), then upstairs, in the $P_{r+1}q_{r+1}$-fold cover, the lift of $N_r^k \setminus N_{r+1}^{k'}$ can be fibered by Legendrian fibers all with twisting $-(A_{r+1}k'+B_{r+1})$.  The reason for this is as follows.  First of all, the $\infty_{r+1}^\prime$-rulings have twisting $-(A_{r+1}k'+B_{r+1})$ on $\partial N_{r+1}^{k'}$, and intersect the $\infty_{r+1}$-longitude positively $P_{r+1}q_{r+1}$ times; hence upstairs in the $P_{r+1}q_{r+1}$-fold cover they will lift to Legendrians of twisting $-(A_{r+1}k'+B_{r+1})$.  As a result, the standard convex annulus $\cA$ from Construction \ref{main construction} will be fibered by Legendrians of twisting $-(A_{r+1}k'+B_{r+1})$ upstairs in the cover as well.  Moreover, the $(P_{r+1},q_{r+1})$ rulings on $\partial N(L_r)$ in Construction \ref{main construction} also have twisting $-(A_{r+1}k'+B_{r+1})$, and in the cover will become longitudinal $(P_{r+1},1)$ rulings, but still with twisting $-(A_{r+1}k'+B_{r+1})$.  Furthermore, the lift of $N(L_r)$ will have convex boundary with two longitudinal dividing curves (a different longitude, of course).  Thus we see that the contact structure on this lift of $N(L_r)$ is just a standard neighborhood of one of the ruling curves (pushed into the interior of the solid torus), and thus the solid torus can be fibered by Legendrians of twisting $-(A_{r+1}k'+B_{r+1})$. 

Note that the rest of the cover (outside the lift of $N_r^k \setminus N_{r+1}^{k'}$) is fibered (horizontally) by the copies of the $\Sigma_r$'s.  By the proof of Lemma 3.3 in \cite{[ELT]} for the case of positive torus knots (see also the discussion in Lemma \ref{basecasepositivetorusknots} above), inductively we can assume that the monodromy for the fibered space $M_r^k$ is periodic, with period that divides a positive integer $m_r$, and such that a resulting $m_r$-fold product cover can be fibered by Legendrian fibers that all have twisting $-s_r(A_rk+B_r)$, where $s_r$ is again some positive integer (for positive torus knots, $m_1 = P_1q_1$ and $s_1 = 1$).  It will be convenient for us, however, to take $m_r$, and multiply it by $-\chi(K_r)$ to get a new $m_r$; in other words, we can assume that $-\chi(K_r)$ divides $m_r$ and $s_r$, and we will still have the $m_r$-fold product cover of $M_r^k$ being fibered by Legendrians all having twisting $-s_r(A_rk+B_r)$.

As a consequence of this and the above topological observations, we can now cyclically stack $m_r$ copies of our $P_{r+1}q_{r+1}$-fold cover of $M_{r+1}^{k'}$ to obtain $\widetilde{M}_{r+1}^{k'} = S^1 \times \Sigma_{r+1}$.  Furthermore, if we restrict to $S^1 \times \sigma_{r+1} \subset S^1 \times \Sigma_{r+1}$, the space $S^1 \times \sigma_{r+1}$ can be fibered by Legendrians all of twisting $-s_{r+1}(A_{r+1}k'+B_{r+1})$, for some positive integer $s_{r+1}$, with respect to the product framing.  However, at the moment all we know is that the $q_{r+1}$ copies of $S^1 \times \Sigma_{r}$ can be fibered by {\em topological} copies of these Legendrian fibers in $S^1 \times \sigma_{r+1}$; what we will show is that in fact $S^1 \times \Sigma_{r+1}$ can be fibered by Legendrian copies of the fibers in $S^1 \times \sigma_{r+1}$.

To this end, we first establish some notation; downstairs let $T = \partial N_r^k$.  As just mentioned, we may assume that the rulings on $T$ are copies of $\infty'_{r+1}$ (i.e., $(P_{r+1},q_{r+1})$ cables on $T$), and the space $N_r^k \setminus N_{r+1}^{k'}$ bounded by $T$ lifts to $S^1 \times \sigma_{r+1}$, where all the $S^1$ fibers are Legendrian isotopic to lifts of $\infty'_{r+1}$, and have twisting $-s_{r+1}(A_{r+1}k'+B_{r+1})$ for some positive integer $s_{r+1}$.  We will call these $S^1$ fibers $S_{r+1}^1$, and note that they are topologically isotopic to the $S^1$ fibers in the product space $S^1 \times \Sigma_{r+1}$.  We also have that if we think of $M_r^k$ as bounded by $T$, then $M_r^k$ lifts to $q_{r+1}$ copies of $S^1 \times \Sigma_r$, where all the $S^1$ fibers are Legendrian isotopic to lifts of $\infty'_r$, and have twisting $-s_rP_{r+1}(A_rk+B_r)$.  We will call these $S^1$ fibers $S_r^1$, and emphasize that these are not the same as the $S_{r+1}^1$'s.  However, we will show that in fact, all of $\widetilde{M}_{r+1}^{k'}$ can be fibered by Legendrian $S^1_{r+1}$'s. 

On the Seifert surface $\Sigma_{r+1}$, we will label the $q_{r+1}$ $\Sigma_r$'s as $\Sigma_r^j$.  Now let $\alpha_{r+1}^i$ be $2g'$ disjoint arcs on $\Sigma_{r+1}$, each with endpoints on $\partial \Sigma_{r+1}$, and such that if we cut along the $\alpha_{r+1}^i$ we obtain a polygon $P_{r+1}$.  Also, let $\alpha_{r,j}^i$ be $2g$ disjoint arcs on $\Sigma_r^j$ that, when we cut along them, yield polygons $P_r^j$.  Thus we have solid tori $S_r^1 \times P_r^j$ embedded in $\widetilde{M}_{r+1}^{k'}$.  We can calculate the boundary slopes of these solid tori using the framing coming from the lifts of $\infty_r^\prime$; this calculation is similar to that in Lemma 3.3 in \cite{[ELT]}.  Specifically, note that a longitude for this torus intersects $\Gamma$, $2s_rP_{r+1}(A_rk+B_r)$ times, and a meridian for this torus is composed of $2$ copies each of the associated $2g$ arcs $\alpha_{r,j}^i$, as well as $4g$ arcs $\beta_i$ from $\partial \Sigma_r^j$.  Now since $\partial \Sigma_r^j$ is a preferred longitude downstairs in $M_r^k$, we know that $\Gamma$ intersects these $\beta_i$, $2(-\chi(K_r)) = 2(2g-1)$ times positively; see Remark \ref{eulercharremark} above.  But then the edge-rounding that results at each intersection of an $S_r^1 \times \beta_i$ with an $S_r^1 \times \alpha_{r,j}^i$ yields $4g$ negative intersections with $\Gamma$.  Thus we obtain after edge-rounding that the boundary slope is $-1/(s_rP_{r+1}(A_rk+B_r))$; as a consequence, we see that the solid torus $S_r^1 \times P_r^j$ is simply a standard neighborhood of a Legendrian of twisting $-(s_rP_{r+1}(A_rk+B_r))$.

Now we switch our attention to the $S_{r+1}^1$'s, and note that the arcs $\alpha_{r+1}^i$ that stay in $\sigma_{r+1}$ represent an interval's worth of $S^1_{r+1}$ fibers of twisting $-s_{r+1}(A_{r+1}k'+B_{r+1})$, and hence represent standard convex annuli in the space $\widetilde{M}_{r+1}^{k'}$.  The arcs $\alpha_{r+1}^i$ that leave $\sigma_{r+1}$ represent convex annuli that are fibered by Legendrian $S^1_{r+1}$'s only when restricted to their intersection with the lift of the space $N_r^k \setminus N_{r+1}^{k'}$ bounded by $T$.  So what is of interest is a convex annulus $\cA_i$ with boundary components that both have twisting $-s_{r+1}(A_{r+1}k'+B_{r+1})$, fibered by topological copies of the $S_{r+1}^1$'s but which is embedded in one of the $q_{r+1}$ lifts of $M_r^k$.

So suppose, for contradiction, that there exists a bypass on one of the $\cA_i$'s.  We look at what passing through this bypass will do on the lift of $T$ to which $\cA_i$ is attached; we use the framing on the lift of $T$ that comes from the the lifts of $\infty'_r$.  First, recall that we know that $q_{r+1}/p_{r+1}$ is clockwise from $-(k+1)/(A_rk+B_r)$ in the Farey tessellation; as a result, we know that the bypass of interest is on a ruling with slope $1/t'$ that is clockwise (in the Farey tessellation) from the dividing slope $s$ of the lift of $T$.  Moreover, we know what this dividing slope $s$ is; it is $-\chi(K_r)/(-s_rP_{r+1}(A_rk+B_r))$, since the original preferred Seifert longitude on $T$ lifts to the meridian on the lift of $T$.  But, since $-\chi(K_r)$ divides $s_r$, this means in lowest terms, $s = -1/t$.  As a result, passing through the bypass would yield a new torus $T'$, on which is a longitudinal curve $\gamma$ topologically isotopic to the $S_r^1$'s, but with twisting greater than $-s_rP_{r+1}(A_rk+B_r)$.  But if we then split the $S_r^1 \times \Sigma_r^j$ that contains the $\cA_i$ along arcs $\alpha_{r,j}^i$ to obtain $S_r^1 \times P_r^j$, and then pass to a finite cover of the base by tiling copies of $S_r^1 \times P_r^j$ (similar to what we did in Lemma 3.3 in \cite{[ELT]}), we will enclose $\gamma$ in a standard neighborhood of a Legendrian of twisting $-s_rP_{r+1}(A_rk+B_r)$, which is a contradiction.  Thus $\cA_i$ must be standard convex.

%If we then split the $S_r^1 \times \Sigma_r^j$ that contains the $\cA_i$ along arcs $\alpha_{r+1}^i$ to obtain $S_r^1 \times P_r^j$, and then pass to a finite cover of the base by tiling copies of $S_r^1 \times P_r^j$ (similar to what we did in Lemma 3.3 in \cite{[ELT]}), we will obtain a bypass off of a ruling inside of a standard neighborhood of a Legendrian of twisting $-s_rP_{r+1}(A_rk+B_r)$; furthermore, .  But this bypass cannot exist due to the classification of tight solid tori, and hence $\cA_i$ must be standard convex.

As a consequence, if we now use the product framing coming from the $S_{r+1}^1$'s, and now split the whole $\Sigma_{r+1}$ along all arcs using the standard convex annuli $S_{r+1}^1 \times \alpha_{r+1}^i$ to obtain $S_{r+1}^1 \times P_{r+1}$, then that boundary torus will have a characteristic foliation that matches that of the standard neighborhood of a Legendrian with twisting $-s_{r+1}(A_{r+1}k'+B_{r+1})$, since the dividing curves on the lift of $\partial N_{r+1}^{k'}$ intersect $\partial \Sigma_{r+1}$ exactly $2(-\chi(K_{r+1}))$ times and hence a similar edge-rounding calculation applies as above.  As a result, the contact structure can be isotoped so that all of the $S_{r+1}^1$ fibers in $\widetilde{M}_{r+1}^{k'}$ are Legendrian of twisting $-s_{r+1}(A_{r+1}k'+B_{r+1})$.  Thus the argument that $N_{r+1}^{k'}$ fails to thicken proceeds exactly as in Lemma 3.3 in \cite{[ELT]}; specifically, if $N_{r+1}^{k'}$ thickens, then there exists a curve $\gamma'$ upstairs in $\widetilde{M}_{r+1}^{k'}$ which is topologically isotopic to the $S_{r+1}^1$'s but has greater twisting.  However, if we then split the $\Sigma_{r+1}$ along arcs $\alpha_{r+1}^i$ to obtain $S_{r+1}^1 \times P_{r+1}$, and then pass to a finite cover of the base by tiling copies of $S_{r+1}^1 \times P_{r+1}$ (similar to what we did in Lemma 3.3 in \cite{[ELT]}), we will enclose $\gamma'$ in a standard neighborhood of a Legendrian of twisting $-s_{r+1}(A_{r+1}k'+B_{r+1})$, which is a contradiction. 
\end{proof}

We conclude with the following lemma that calculates rotation numbers.

\begin{lemma}
\label{calculate rotation numbers}
Any non-thickenable $N_{r+1}^{k'}$ have contact structures that are universally tight and have convex meridian discs $D$ whose bypasses bound half-discs all of the same sign; i.e., $\rot(\partial D) = \pm k'$.  Also, the preferred longitude on $\partial N_{r+1}^{k'}$ has rotation number zero for $k'>0$.
\end{lemma}  

\begin{proof}
We first prove that the contact structure on a candidate $N_{r+1}^{k'}$ which fails to thicken is universally tight.  To see this note that from Lemma \ref{nonthickening inductive step} above, and the inductive hypothesis, such a candidate $N_{r+1}^{k'}$ is embedded inside a $N_r^k$ with a universally tight contact structure.  Now there is a $q_{r+1}$-fold cover of $N_r^k$ that maps a total of $q_{r+1}$ lifts $\widetilde{N}_{r+1}^{k'}$ to $N_{r+1}^{k'}$, the lifts themselves each being an $S^1 \times D^2$ .  This cover in turn has a universal cover $\mathbb{R} \times D^2$ that contains $q_{r+1}$ copies of a universal cover $\mathbb{R} \times D^2$ of $N_{r+1}^{k'}$.  Since, by the inductive hypothesis, the universal cover of $N_r^k$ has a tight contact structure, a tight contact structure is thus induced on the universal cover of $N_{r+1}^{k'}$.

Now recall that $N_r^k$ is formed from $N_{r+1}^{k'}$ by first joining $\infty'$-longitudes on $\partial N_{r+1}^{k'}$ with an annulus $\cA$ to get a thickened torus $R = T^2 \times [0,1]$, and then gluing in a standard neighborhood of a Legendrian knot.  Thus, since $N_r^k$ has bypasses all of the same sign, by similar reasoning as that in Construction \ref{main construction}, it follows that a horizontal annulus in $R$ has bypasses all of the same sign.  We will thus have that $N_{r+1}^{k'}$ must have convex meridian discs all of the same sign.  The computation of rotation numbers for the meridian curve follows.  

To show that the preferred longitude on $\partial N_{r+1}^{k'}$ has rotation number zero, we use an argument similar to that used in Lemma 3.4 in \cite{[ELT]}.  We call the meridian disc for $N_r^k$, $D_r$, and the Seifert surface for the preferred longitude on $\partial N_r^k$, $\Sigma_r$.  If we then look at the $(P_{r+1},q_{r+1})$ cable on $\partial N_r^k$, we can calculate its rotation number as

\begin{equation}
\rot((P_{r+1},q_{r+1}))=P_{r+1}\rot(\partial D_r) + q_{r+1}\rot(\partial \Sigma_r) = P_{r+1}(\pm q_{r+1}k')\nonumber
\end{equation}

\noindent But then since this same knot is a $(P_{r+1}q_{r+1},1)$ cable on $\partial N_{r+1}^{k'}$, we have that $\rot((P_{r+1},q_{r+1}))$ $= P_{r+1}q_{r+1}(\pm k') + q_{r+1} \rot(\partial \Sigma)$, where $\Sigma$ is a Seifert surface for the preferred longitude on $\partial N_{r+1}^{k'}$.  This implies that $\rot(\partial \Sigma)=0$.
\end{proof}  

\medskip

\subsection{Case II:  $w(K_r) > P_{r+1}/q_{r+1} > 0$.}   As in Case I, we work through proving items 2-5 in the inductive hypothesis via a series of lemmas.  

We begin by proving item 2 in the inductive hypothesis.

\begin{lemma}
If $w(K_r) > P_{r+1}/q_{r+1} > 0$, then $\overline{\tb}(K_{r+1})=w(K_{r+1})=A_{r+1}$.
\end{lemma}

\begin{proof}
The proof is almost identical to that of step 1 in the proof of Theorem 1.5 from Section 6 of \cite{[L]}; we will include the details, though, since certain aspects differ.  We first examine representatives of $K_{r+1}$ at $\overline{\tb}$.  Since there exists a convex torus representing $K_r$ with Legendrian divides that are $(p_{r+1},q_{r+1})$ cablings (inside of the solid torus representing $K_r$ with $\textrm{slope}(\Gamma)=1/\overline{t}(K_r)$) we know that $\overline{\tb}(K_{r+1}) \geq P_{r+1}q_{r+1}=A_{r+1}$.  To show that $\overline{\tb}(K_{r+1}) = A_{r+1}$, we show that $\overline{t}(K_{r+1})=0$ by showing that the contact width $w(K_{r+1},\mathcal{C}')=0$, since this will yield $\overline{\tb}(K_{r+1}) \leq w(K_{r+1})=A_{r+1}$.  So suppose, for contradiction, that some $N_{r+1}$ has convex boundary with $\textrm{slope}(\Gamma_{\partial N_{r+1}})=s > 0$, as measured in the $\mathcal{C}'$ framing, and two dividing curves.  After shrinking $N_{r+1}$ if necessary, we may assume that $s$ is a large positive integer.  Then let $\mathcal{A}$ be a convex annulus from $\partial N_{r+1}$ to itself having boundary curves with slope $\infty'$.  Taking a neighborhood of $N_{r+1} \cup \mathcal{A}$ yields a thickened torus $R$ with boundary tori $T_1$ and $T_2$, arranged so that $T_1$ is inside the solid torus $N_r$ representing $K_r$ bounded by $T_2$.

Now there are no boundary parallel dividing curves on $\mathcal{A}$, for otherwise, we could pass through the bypass and increase $s$ to $\infty'$, yielding excessive twisting inside $N_{r+1}$.  Hence $\mathcal{A}$ is in standard form, and consists of two parallel nonseparating arcs.  We now choose a new framing $\mathcal{C}''$ for $N_r$ where $(p_{r+1},q_{r+1}) \mapsto (0,1)$; then choose $(p'',q'') \mapsto (1,0)$ so that $p''q_{r+1}-q''p_{r+1}=1$ and such that $\textrm{slope}(\Gamma_{T_1})=-s$ and $\textrm{slope}(\Gamma_{T_2})=1$.  As mentioned in \cite{[EH1]}, this is possible since $\Gamma_{T_1}$ is obtained from $\Gamma_{T_2}$ by $s+1$ right-handed Dehn twists.  Then note that in the $\mathcal{C'}$ framing, we have that $q_{r+1}/p_{r+1} > \textrm{slope}(\Gamma_{T_2})=(q''+q_{r+1})/(p''+p_{r+1}) > q''/p''$, and $q_{r+1}/p_{r+1}$ and $q''/p''$ are connected by an arc in the Farey tessellation of the hyperbolic disc (see section 2.2.3 in \cite{[ELT]}).  Thus, since $1/\overline{t}(K_r)$ is connected by an arc to $0/1$ in the Farey tessellation, we must have that $(q''+q_{r+1})/(p''+p_{r+1}) > 1/\overline{t}(K_r)$.  Thus we can thicken $N_r$ to one of the solid tori with $\textrm{slope}(\Gamma)=-(k+1)/(A_rk+B_r)$ which fails to thicken.  Then, just as in Claim 4.2 in \cite{[EH1]}, we have the following:

\begin{itemize}
	\item[(i)] Inside $R$ there exists a convex torus parallel to $T_i$ with slope $q_{r+1}/p_{r+1}$; 
	\item[(ii)] $R$ can thus be decomposed into two layered {\em basic slices};
	\item[(iii)] The tight contact structure on $R$ must have {\em mixing of sign} in the Poincar$\acute{\textrm{e}}$ duals of the relative half-Euler classes for the layered basic slices;
	\item[(iv)] This mixing of sign cannot happen inside the universally tight solid torus which fails to thicken.
\end{itemize}

This last statement is due to the proof of Proposition 5.1 in \cite{[H1]}, where it is shown that mixing of sign will imply an overtwisted disc in the universal cover of the solid torus.  Thus we have contradicted $s > 0$.  So $\overline{\tb}(K_{r+1})=P_{r+1}q_{r+1}=A_{r+1}$.
\end{proof}

With the following lemma we prove that items 3 and 4 of the inductive hypothesis hold for $K_{r+1}$.

\begin{lemma}
\label{nonthickening inductive step 2}
If $w(K_r) > P_{r+1}/q_{r+1} > 0$, let $N_{r+1}$ be a solid torus representing $K_{r+1}$, for $r \geq 1$.  Then $N_{r+1}$ can be thickened to an $N_{r+1}^{k'}$ for some nonnegative integer $k'$.  Moreover, if $N_{r+1}$ fails to thicken, then it has the same boundary slope as some $N_{r+1}^{k'}$, as well as at least $2n_{r+1}^{k'}$ dividing curves.
\end{lemma}

\begin{proof}
This is the case where $p_{r+1} < 0$ but $q_{r+1}/p_{r+1} \in (1/\overline{t}(K_r), -1/A_r)$; we have that $\overline{t}(K_{r+1})=0$.  We begin as we did in Case I.  If $N_{r+1}$ is a solid torus representing $K_{r+1}$, as before choose $L_r$ in $S^3 \backslash N_{r+1}$ such that $\partial N(L_r)$ is joined to $\partial N_{r+1}$ by an annulus $\mathcal{A}_{(p_{r+1},q_{r+1})}$, and with $\tb(L_r)$ maximized over topological isotopies in the space $S^3 \backslash N_{r+1}$.

Now suppose $\textrm{slope}(\Gamma_{\partial N(L_{r})})=-1/m$ where $-1/m < q_{r+1}/p_{r+1}$; we know $m \geq 0$.  Then inside $N(L_r)$ is an $N_r$ with boundary slope $q_{r+1}/p_{r+1}$.  But then we can extend $\mathcal{A}_{(p_{r+1},q_{r+1})}$ to an annulus that has no twisting on one edge, and we can thus thicken $N_{r+1}$ so it has boundary slope $\infty'$.  Moreover, since there is twisting inside $N(L_r)$, we can assure there are two dividing curves on the thickened $N_{r+1}$ (see Claim 4.1 in \cite{[EH1]}).  So this situation yields no nontrivial solid tori $N_{r+1}$ which fail to thicken; in other words, $N_{r+1}$ can be thickened to an $N_{r+1}^0$.

Alternatively, suppose $-1/m > q_{r+1}/p_{r+1}$; note here we must have $m > 0$.  Furthermore, for the moment suppose $-1/(m-1) > q_{r+1}/p_{r+1}$.  Then we can use the twist number lemma (Lemma \ref{twistnumber} above) to conclude that there are no bypasses on the $\partial N(L_r)$-edge of $\mathcal{A}_{(p_{r+1},q_{r+1})}$, and so we can thicken $N_{r+1}$ through bypasses so that $\mathcal{A}_{(p_{r+1},q_{r+1})}$ is standard convex.  

Then, as in Lemma \ref{nonthickening inductive step}, we let $N_r := N_{r+1} \cup N(\cA_{(p_{r+1},q_{r+1})}) \cup N(L_r)$.  We know that $w(K_{r+1}, \mathcal{C}') = 0$, and we know that the geometric intersection of the $\infty'$-rulings on $\partial N_{r+1}$ with $\Gamma_{\partial N_{r+1}}$ equals $p_{r+1}+mq_{r+1} > 0$.  Thus, we must have that the $(p_{r+1},q_{r+1})$-rulings intersect $\Gamma_{\partial N_r}$ positively; i.e., $q_{r+1}/p_{r+1}$ is clockwise from $\textrm{slope}(\Gamma_{\partial N_r})$ in the Farey tessellation.  As a result, when we thicken to $N_{r}^k$ as in Lemma \ref{nonthickening inductive step}, we must also have $q_{r+1}/p_{r+1}$ clockwise from $-(k+1)/(A_rk+B_r)$ in the Farey tessellation, for otherwise we could destabilize $L_r$ (in the complement of $N_{r+1}$) via an annulus with $(p_{r+1},q_{r+1})$-ruling boundary on $\partial N(L_r)$, and $(p_{r+1},q_{r+1})$-Legendrian divide boundary on a torus $N_r^\prime$ with boundary slope $q_{r+1}/p_{r+1}$ in the thickened torus cobounded by $\partial N_r$ and $\partial N_r^k$.

Thus, since $q_{r+1}/p_{r+1}$ is clockwise from both $-1/m$ and $-(k+1)/(A_rk+B_r)$ in the Farey tessellation, the calculation of the boundary slope goes through as above in Lemma \ref{nonthickening inductive step} -- see the comment after equation \ref{equationarray} above, and note that such $N_r^k$ exist since $-(k+1)/(A_rk+B_r) \rightarrow -1/A_r$ as $k$ increases.  We conclude that $N_{r+1}$ thickens to some $N_{r+1}^{k'}$.

For the remaining case, suppose $-1/m > q_{r+1}/p_{r+1}$ and $m$ is the least positive integer satisfying this inequality.  Thus $-1/(m-1) < q_{r+1}/p_{r+1}$.  Again look at the $\partial N(L_r)$-edge of $\mathcal{A}_{(p_{r+1},q_{r+1})}$.  We claim that this edge has no bypasses.  So, for contradiction, suppose it does.  Then we can thicken $N(L_r)$ to a solid torus where the (efficient) geometric intersection of $(p_{r+1},q_{r+1})$ with dividing curves is less than $p_{r+1}+mq_{r+1}$.  Suppose the slope of this new solid torus is $-\lambda/\mu < -1/m$, where $\lambda > 1$ since $m$ is minimized in the complement of $N_{r+1}$.

We do some calculations.  Note first that if $m/\mu > 1$, then $m > \mu$, which means $m-1 \geq \mu$, which implies $-1/(m-1) \geq -1/\mu > -\lambda/\mu$, which cannot happen, again since $m$ is minimized in the complement of $N_{r+1}$.  Thus we must have $m/\mu \leq 1$.  But then the geometric intersection of $(p_{r+1},q_{r+1})$ with $(-\mu,\lambda)$ is $\lambda p_{r+1} + \mu q_{r+1} > (\mu/m)p_{r+1} + \mu q_{r+1} \geq (m/\mu)[(\mu/m)p_{r+1} + \mu q_{r+1}] = p_{r+1} + mq_{r+1}$.  This is a contradiction.

Thus there are no bypasses on the $\partial N(L_r)$-edge of $\mathcal{A}_{(p_{r+1},q_{r+1})}$, and we can thicken $N_{r+1}$ through any bypasses so that $\mathcal{A}_{(p_{r+1},q_{r+1})}$ is standard convex.  The calculations that show $N_{r+1}$ thickens to $N_{r+1}^{k'}$ go through as above in Lemma \ref{nonthickening inductive step}; in particular, as above, the non-thickenable $N_r^k$ that is used will be such that $q_{r+1}/p_{r+1} < -(k+1)/(A_rk+B_r)$.

This shows that any $N_{r+1}$ representing $K_{r+1}$ can be thickened to one of the $N_{r+1}^{k'}$, and if $N_{r+1}$ fails to thicken, then it has the same boundary slope as some $N_{r+1}^{k'}$.  We now show that if $N_{r+1}$ fails to thicken, and if it has the minimum number of dividing curves over all such $N_{r+1}$ which fail to thicken and have the same boundary slope as $N_{r+1}^{k'}$, then $N_{r+1}$ is actually an $N_{r+1}^{k'}$.   

To see this, as above we can choose a Legendrian $L_{r}$ that maximizes $\tb$ in the complement of $N_{r+1}$ and such that we can join $\partial N(L_{r})$ to $\partial N_{r+1}$ by a convex annulus $\mathcal{A}_{(p_{r+1},q_{r+1})}$ whose boundaries are $(p_{r+1},q_{r+1})$ and $\infty'$ rulings on $\partial N(L_{r})$ and $\partial N_{r+1}$, respectively.  Now since $N_{r+1}$ fails to thicken, we can assume that $q_{r+1}/p_{r+1} < -1/m$ and that there are no bypasses on the $\partial N(L_{r})$-edge, and in this case we have no bypasses on the $\partial N_{r+1}$-edge since $N_{r+1}$ fails to thicken and is at minimum number of dividing curves.

As above, let $N_{r}:=N_{r+1} \cup N(\mathcal{A}_{(p_{r+1},q_{r+1})}) \cup N(L_{r})$.  We claim this $N_{r}$ fails to thicken -- the proof proceeds identically as above in Lemma \ref{nonthickening inductive step}, as does the proof that $N_{r+1}$ is in fact an $N_{r+1}^{k'}$.
\end{proof}

The following proof of item 5 of the inductive hypothesis is similar to that of Case I.

\begin{lemma}
If $w(K_r) > P_{r+1}/q_{r+1} > 0$, the candidate $N_{r+1}^{k'}$ exist and actually fail to thicken for $k' \geq C_{r+1}$, where $C_{r+1}$ is some positive integer.  Moreover, these $N_{r+1}^{k'}$ have contact structures that are universally tight and have convex meridian discs whose bypasses bound half-discs all of the same sign.  Also, the preferred longitude on $\partial N_{r+1}^{k'}$ has rotation number zero for $k'>0$.
\end{lemma}  

\begin{proof}
The proof that the contact structure on a candidate $N_{r+1}^{k'}$ which fails to thicken is universally tight is identical to the argument in Case I, as are the calculations of the rotation numbers.
  
Now we know inductively that there exists a $C_r$ such that if $k \geq C_r$, then the $N_r^k$ exist and fail to thicken.  So suppose $k/q_{r+1} \in \mathbb{N}$ for some $k \geq C_r$.  Also assume that $q_{r+1}/p_{r+1} < -(k+1)/(A_rk+B_r)$; we know such a $k$ exists since $-(k+1)/(A_rk+B_r) \rightarrow -1/A_r$ as $k$ increases.  Then $N_{r+1}^{k'}$ exists and fails to thicken as in the argument for Case I for $k':=k/q_{r+1}$, and $C_{r+1}$ will be the least such $k/q_{r+1} \in \mathbb{N}$.
\end{proof}

\section{Negative cablings that satisfy the UTP}
\label{sec:negativecablings}

We provide below the proof of Lemma 3.3, which is really just a matter of referencing a previous proof.

\begin{proof}
This is the case where $q_{r+1}/p_{r+1} \in (-1/A_r,0)$ in the $\mathcal{C}'$ framing, we know $K_r$ satisfies the inductive hypothesis, and we wish to show that $K_{r+1}$ satisfies the UTP.  The proof is identical to that of steps 1 and 2 in the proof of Theorem 1.5 from Section 6 of \cite{[L]}, the key being that since $-1/A_r < q_{r+1}/p_{r+1} < 0$, this cabling slope is shielded (in the Farey tessellation by an arc from $-1/A_r$ to $0$) from any $N_r^k$ that fail to thicken.
\end{proof}

\section{Transversely non-simple iterated torus knots}
\label{sec:nonsimple}

We have completed the UTP classification of iterated torus knots; it now remains to show that in the class of iterated torus knots, failing the UTP is a sufficient condition for supporting transversely non-simple cablings.  To this end, in this section we prove Theorem \ref{second theorem}; we do so by working through two lemmas.  The first lemma will give us information about just a piece of the Legendrian mountain range for $K_r = ((P_1,q_1),...,(P_r,q_r))$ where $P_i > 0$ for all $i$; in the second lemma, we will then use this information to obtain enough information about the Legendrian mountain ranges of certain cables $K_{r+1}$ to conclude that these cables are transversely non-simple.  We will therefore not be completing the Legendrian or transverse classification of these iterated torus knots.

\begin{lemma}
\label{sliceofmtnrange}
Suppose $K_r = ((P_1,q_1),...,(P_r,q_r))$ is an iterated torus knot where $P_i > 0$ for all $i$.  Then there exists Legendrian representatives $L_r^{\pm}$ with $\tb(L_r^{\pm}) = 0$ and $\rot(L_r^{\pm})=\pm(A_r-B_r)$; also, $L_r^{\pm}$ destabilizes.
\end{lemma}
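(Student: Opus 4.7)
My plan is to realize $L_r^\pm$ as Legendrian approximations of the unique maximal self-linking transverse representative of $K_r$, building on the UTP classification established in the preceding sections together with the cited results of Hedden and of Etnyre-Van Horn-Morris.

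By Theorem~\ref{main theorem}, the iterated torus knot $K_r$ fails the UTP, whence $\xi_{K_r} \cong \xi_{std}$ by its corollary. Hedden's work~\cite{[He3]} then identifies $K_r$ as a fibered strongly quasipositive knot, and~\cite{[He2]} yields $\overline{sl}(K_r) = -\chi(K_r)$. By Lemma~\ref{eulerchar}, this value equals $A_r - B_r$, so there exists a transverse representative $T$ with $sl(T) = A_r - B_r$.

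To produce $L_r^+$, I take any Legendrian in the chain of positive-stabilization-equivalent approximations of $T$ (each of which satisfies $T_-(L) = T$, since $T_-$ is preserved by positive stabilization) at $tb = 0$. Such a Legendrian exists because the chain begins at a maximum-$tb$ representative---finiteness of $\overline{tb}(K_r)$ is part of the inductive hypothesis used in proving Theorem~\ref{main theorem}---and $tb$ decreases by one at each positive stabilization. The identity $sl(T) = tb(L_r^+) + r(L_r^+)$ then forces $r(L_r^+) = A_r - B_r$. Dually, $L_r^-$ arises from the negative-stabilization chain of $T$'s approximations, with $T_+(L_r^-) = T$, $tb(L_r^-) = 0$, and $r(L_r^-) = -(A_r - B_r)$ via $sl(T) = tb(L_r^-) - r(L_r^-)$. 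In each case, $L_r^\pm$ is by construction a stabilization of a higher-$tb$ Legendrian, so destabilization is obtained by reversing the corresponding stabilization sequence.

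The main conceptual step is the invocation of the Hedden-Etnyre-Van Horn-Morris machinery to guarantee existence of $T$ at $\overline{sl}(K_r)$. The only residual obstacle is verifying the existence of a maximum-$tb$ Legendrian approximation of $T$ that serves as the top of the stabilization chain; this follows directly from the finiteness of $\overline{tb}(K_r)$ together with the standard transverse-Legendrian correspondence, so no fresh convex-torus computation is needed beyond what is already encoded in the inductive hypothesis.
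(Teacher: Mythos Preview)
Your approach---reading off the invariants of $L_r^\pm$ from $\overline{sl}(K_r)=-\chi(K_r)=A_r-B_r$ via Hedden and Etnyre--Van Horn-Morris, rather than by an inductive cabling computation---is genuinely different from the paper's, and would be more conceptual if it worked. But there is a real gap in the step where you pass from the transverse knot $T$ at $\overline{sl}$ to a Legendrian at $tb=0$ that destabilizes.

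Knowing that $T$ exists tells you only that the extreme diagonal $tb+r=A_r-B_r$ (respectively $tb-r=A_r-B_r$) is populated at all sufficiently negative $tb$; to land at $tb=0$ and to have a destabilization you need that diagonal to be populated at some $tb\geq 1$. You justify this with ``finiteness of $\overline{tb}(K_r)$,'' but finiteness gives the chain a top, not a \emph{nonnegative} top. The inductive hypothesis records $\overline{tb}(K_r)>0$, which says the peak of the mountain range is at positive height; it says nothing about whether that peak lies on the extreme Bennequin diagonal. Indeed, in Case~II of \S5 one has $\overline{tb}(K_{r+1})=A_{r+1}$ while $\overline{sl}(K_{r+1})=A_{r+1}-B_{r+1}$ with $B_{r+1}<0$, so the peak sits strictly inside the Bennequin cone; nothing you cite forces the extreme diagonals to reach up to $tb\geq 1$. (Concretely, for $K_2=((2,3),(1,2))$ one gets $\overline{tb}=2$ but $\overline{sl}=3$.)

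The paper closes this gap by exactly the computation you hoped to avoid: it builds $\widetilde L_r^{\pm}$ as $(P_r,q_r)$ Legendrian rulings on $\partial N(L_{r-1}^{\pm})$, finds $tb(\widetilde L_r^{\pm})=A_r-P_r>0$ and $r(\widetilde L_r^{\pm})=\pm(P_r-B_r)$ via the Etnyre--Honda rotation formula, checks $sl(T_{\mp}(\widetilde L_r^{\pm}))=A_r-B_r$ so that these sit on the extreme diagonals, and then stabilizes the $A_r-P_r$ steps down to $tb=0$. That explicit ruling construction is precisely what supplies a Legendrian on the extreme diagonal at positive $tb$, and hence what delivers both the existence of $L_r^{\pm}$ at $tb=0$ and the destabilization claim.
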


\begin{proof}
The lemma is true for positive torus knots \cite{[EH2]}, so we inductively assume it is true for $K_{r-1}$.  Then look at Legendrian rulings $\widetilde{L}_r^{\pm}$ on standard neighborhoods of the inductive $L_{r-1}^{\pm}$.  In the $\mathcal{C}'$ framing the boundary slope of these $N(L_{r-1}^{\pm})$ is $-1/A_{r-1}$, and so a calculation shows that $t(\widetilde{L}_r^{\pm}) = -P_r$; hence $\tb(\widetilde{L}_r^{\pm})=A_r-P_r$.

To calculate the rotation number of $\widetilde{L}_r^{\pm}$, we use the following formula from \cite{[EH1]}, where $D$ is a convex meridian disc for $N(L_{r-1}^{\pm})$ and $\Sigma$ is a Seifert surface for the preferred longitude on $\partial N(L_{r-1}^{\pm})$:

\begin{eqnarray*}
\rot(\widetilde{L}_r^{\pm}) &=& P_r \rot(\partial D) + q_{r} \rot(\partial\Sigma)\\\nonumber
											 &=& \pm q_r (A_{r-1}-B_{r-1})\\\nonumber
											 &=& \pm (q_rA_{r-1} + p_r - q_rB_{r-1} -p_r)\\\nonumber
											 &=& \pm(P_r-B_r)
\end{eqnarray*}

This gives us

\begin{equation}
sl(T_{-}(\widetilde{L}_r^{+}))=(A_r-P_r)+(P_r-B_r)=A_r-B_r\nonumber
\end{equation}

and

\begin{equation}
sl(T_{+}(\widetilde{L}_r^{-}))=(A_r-P_r)-(-(P_r-B_r))=A_r-B_r\nonumber
\end{equation}

This, along with Lemma \ref{eulerchar}, shows us that $\widetilde{L}_r^{+}$ is on the right-most slope of the Legendrian mountain range of $K_r$, and $\widetilde{L}_r^{-}$ is on the left-most edge.  To the former we can perform positive stabilizations to reach $L_r^+$ at $\tb=0$ and $\rot=A_r-B_r$; to the latter we can perform negative stabilizations to reach $L_r^-$ at $\tb=0$ and $\rot=-(A_r-B_r)$ -- we know such stabilizations can be performed since $A_r-P_r > 0$. 
\end{proof}

So suppose $K_r$ is an iterated torus knot that fails the UTP (which is precisely when $P_i > 0$ for all $i$).  Then we know that for $k \geq C_r$ there exist non-thickenable solid tori $N_r^k$ having intersection boundary slopes of $-(k+1)/(A_rk+B_r)$, where these slopes are measured in the $\mathcal{C}'$ framing.  Switching to the standard $\mathcal{C}$ framing, these intersection boundary slopes are $(k+1)/(A_r-B_r) = -(k+1)/\chi(K_r)$.  Now as $k \rightarrow \infty$, there are infinitely many values of $k+1$ which are prime and greater than $A_r-B_r$.  As a consequence, there are infinitely many $N_r^k$ with two dividing curves.  Based on this observation, we make the following definition:

\begin{definition}
{\em Suppose $K_r = ((P_1,q_1),...,(P_r,q_r))$ is an iterated torus knot where $P_i > 0$ for all $i$. Let $\widehat{K}_{r+1}$ be a cabling of $K_r$ with $\mathcal{C}'$ slope $-(k+1)/(A_rk+B_r)$, where $-1/(A_r-1) < -(k+1)/(A_rk+B_r) < -1/A_r$ and there is an $N_r^k$ with two dividing curves that fails to thicken.}
\end{definition}

So given $K_r$, there are infinitely many such cabling knot types $\widehat{K}_{r+1}$, all of these being cablings of the form $(-\chi(K_r), k+1)$ as measured in the preferred framing.  The following lemma will then prove Theorem \ref{second theorem}.

\begin{lemma}
$\widehat{K}_{r+1}$ is a transversely non-simple knot type.
\end{lemma}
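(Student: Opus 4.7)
The plan is to produce two Legendrian representatives $L_A$ and $L_B$ of $\widehat{K}_{r+1}$ at a common point $(\overline{tb}, r_0)$ of the Legendrian mountain range, and then to show that their positive transverse push-offs yield distinct transversal isotopy classes at $sl = \overline{tb} - r_0$. Using the change-of-basis formula together with $A_r > B_r$, the $\mathcal{C}'$-slope $-\frac{k+1}{A_rk+B_r}$ translates to the $\mathcal{C}$-slope $\frac{A_r - B_r}{k+1}$, which lies strictly in $(0, w(K_r))$; thus $\widehat{K}_{r+1}$ sits in the Case II regime, so $\overline{tb}(\widehat{K}_{r+1}) = A_{r+1} = (k+1)(A_r - B_r)$, and any Legendrian divide on a convex torus of the cabling slope automatically realizes $\overline{tb}$.

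For $L_A$, take a Legendrian divide on the boundary of a non-thickenable $N_r^k$ with two dividing curves at slope $-\frac{k+1}{A_rk+B_r}$, which exists by the very definition of $\widehat{K}_{r+1}$. Item 5 of the inductive hypothesis gives $r(\partial \Sigma_r) = 0$ for a Seifert surface of the preferred longitude, while the $k$ like-signed bypasses on a convex meridian disc force $r(\partial D_r) = \pm k$; the cabling formula $r = P_{r+1}\,r(\partial D_r) + q_{r+1}\,r(\partial \Sigma_r)$ then yields $r(L_A) = \pm k(A_r - B_r)$. Fix the positive sign and set $r_0 := k(A_r - B_r)$.

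For $L_B$, start with the destabilizable Legendrian $L_r^{\pm}$ of Lemma \ref{sliceofmtnrange} at $(tb, r) = (0, \pm (A_r - B_r))$. Its standard neighborhood has $\mathcal{C}'$-boundary slope $-\frac{1}{A_r}$ and, by destabilizability, embeds in a thickenable solid torus; attach basic slices along the Farey arc from $-\frac{1}{A_r}$ to $-\frac{k+1}{A_rk+B_r}$ to produce a convex torus $\partial \widetilde{N}_r$ at the cabling slope with two dividing curves. Because the tight contact structures on this thickened region are parameterized by sign sequences of its basic slice layers, a judicious choice of sign layering together with the sign of $L_r^{\pm}$ produces mixed-sign bypasses on the convex meridian disc of $\widetilde{N}_r$ for which the cabling formula, combined with $r(\partial \widetilde{\Sigma}) = \pm(A_r - B_r)$, forces $r(L_B) = r_0 = r(L_A)$; the precise bookkeeping is a short calculation.

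It remains to prove that $T^+(L_A)$ and $T^+(L_B)$ are not transversally isotopic, equivalently, that for no $n \geq 0$ are the $n$-fold negative stabilizations $S_-^n(L_A)$ and $S_-^n(L_B)$ Legendrian isotopic. I argue by contradiction: such an isotopy extends to an ambient contact isotopy of $S^3$ under which a solid-torus neighborhood of $L_A$ is carried to one of $L_B$, and enlargement together with the destabilization structure promotes this to a contactomorphism between a torus of cabling slope enclosing $L_A$, which sits inside the universally tight non-thickenable $N_r^k$ with same-sign meridional bypasses, and a torus enclosing $L_B$, which sits inside the thickenable $\widetilde{N}_r$ with mixed-sign meridional bypasses. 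The classification of tight contact structures on solid tori (Honda, Proposition 5.1 in \cite{[H1]}) distinguishes these two contact structures, contradicting the isotopy. Carefully promoting a Legendrian isotopy of deeply negatively stabilized knots to an ambient contact isotopy of enclosing solid tori, and then extracting the non-thickenability contradiction using convex surface theory, is the principal technical obstacle.
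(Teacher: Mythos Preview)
Your construction of the two candidates is essentially the paper's: a Legendrian divide $L_A$ on the non-thickenable $N_r^k$ and a divide $L_B$ on a thickenable torus $\widetilde{N}_r$ at the same slope, with matching $(tb,r)$. The genuine gap is your non-isotopy argument. You claim a Legendrian isotopy of $S_-^n(L_A)$ to $S_-^n(L_B)$ can be ``promoted'' to a contactomorphism carrying $N_r^k$ onto $\widetilde{N}_r$, and then invoke Honda's classification. But the ambient contact isotopy only carries $N_r^k$ to \emph{some} non-thickenable solid torus $N'$ with $S_-^n(L_B)$ near its boundary; nothing forces $N'$ to coincide with, or even be comparable to, your particular $\widetilde{N}_r$. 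A Legendrian divide can lie on several inequivalent convex tori simultaneously, so having $L_B$ on both a thickenable and a non-thickenable torus is not by itself a contradiction. You flag this as ``the principal technical obstacle'' but provide no mechanism for resolving it, and it is not clear one exists along these lines.

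The paper's argument is structurally different here and rests on an external input you omit entirely: the Etnyre--Van Horn-Morris uniqueness of the transversal class at $\overline{sl}$ for fibered knots supporting $\xi_{std}$. Since $\widehat{K}_{r+1}$ is such a knot, the push-offs at $\overline{sl}$ (which are one dividing curve on each torus) are automatically transversally isotopic, so after an isotopy the two tori share a dividing curve $\gamma_+$. Assuming for contradiction the \emph{other} push-offs also agree, the open book with binding $\gamma_+$ lets the paper perform the second transverse isotopy relative to $\gamma_+$, so the tori now share both dividing curves $\gamma_\pm$. Only with both curves pinned does a bypass-by-bypass argument (modeled on Lemma~6.8 of \cite{[EH1]}) apply: every bypass in the complement of $\gamma_\pm$ preserves the slope and the non-thickenability, forcing $\widehat{N}_r$ itself to be non-thickenable --- the contradiction. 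Aligning the dividing curves via $\overline{sl}$-uniqueness is precisely what makes the two tori comparable; without it your promotion step has no foundation.
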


\begin{proof}
We first calculate $\chi(\widehat{K}_{r+1})$.  Using the recursive expression from Lemma \ref{eulerchar} we obtain

\begin{eqnarray*}
\chi(\widehat{K}_{r+1}) &=& q_{r+1} \chi(K_{r}) - P_{r+1} q_{r+1} + P_{r+1}\\\nonumber
												&=& (k+1)(-A_r+B_r) - (A_r-B_r)(k+1)+(A_r-B_r)\\\nonumber
												&=& (2k+1)(-A_r + B_r)\nonumber
\end{eqnarray*}

We now look at the two universally tight non-thickenable $N_r^k$ that have representatives of $\widehat{K}_{r+1}$ as Legendrian divides.  These Legendrian divides have $\tb=A_{r+1}=q_{r+1}P_{r+1}=(k+1)(A_r-B_r)$.  To calculate rotation numbers, we have two possibilities, depending on which boundary of the two universally tight $N_r^k$ the Legendrian divides reside.  Using the formula from \cite{[EH1]}, we obtain

\begin{eqnarray*}
\rot(\widehat{K}_{r+1}) &=& q_{r+1} \rot(\partial\Sigma) + P_{r+1} \rot(\partial D)\\\nonumber
										 &=& P_{r+1}(\pm(q_{r+1}-1))\\\nonumber
										 &=& \pm k (A_r-B_r)
\end{eqnarray*}

We will call the two Legendrian divides corresponding to $\rot=\pm k(A_r-B_r)$, $L_{r+1}^{\pm}$ respectively.  We can calculate the self-linking number for the negative transverse push-off of $L_{r+1}^+$ to be $sl = (2k+1)(A_r-B_r)=-\chi(\widehat{K}_{r+1})$.  This shows that $L_{r+1}^+$ is on the right-most edge of the Legendrian mountain range and is at $\overline{\tb}$.  Similarly, $L_{r+1}^-$ is on the left-most edge of the Legendrian mountain range and is at $\overline{\tb}$.

We now look at solid tori $\widehat{N}_r$ with intersection boundary slope $-(k+1)/(A_rk+B_r)$, but which {\em thicken} to solid tori with intersection boundary slopes $-1/(A_r-1)$.  Such tori $\partial \widehat{N}_r$ are embedded in universally tight basic slices bounded by tori with dividing curves of slope $-1/(A_r-1)$ and $-1/A_r$.  Legendrian divides on such $\widehat{N}_r$ have $\tb=(k+1)(A_r-B_r)$; to calculate possible rotation numbers for these Legendrian divides, we recall the procedure used in the proof of Theorem 1.5 in Section 6 of \cite{[L]}.  There, in equation 14, we used a formula for the rotation numbers from \cite{[EH1]}, where the range of rotation numbers was given by the following (substituting $A_r-1$ for $n$):

\begin{equation}
\rot(L_{r+1}) \in \left\{ \pm (p_{r+1} + (A_r -1)q_{r+1} + q_{r+1} \rot(L_r)) | \tb(L_r)=A_r-(A_r-1)=1 \right\}\nonumber
\end{equation}

Now from Lemma \ref{sliceofmtnrange} we know that there is an $L_r$ with $\tb(L_r)=1$ and $\rot(L_r)=-(A_r-B_r)+1$.  Plugging this value of the rotation number into the expression above yields $\rot(L_{r+1})=\pm k(A_r-B_r)$.  We will call the Legendrian divides having these rotation numbers $\widehat{L}_{r+1}^{\pm}$, respectively.  Important for our purposes is that $\widehat{L}_{r+1}^{\pm}$ have the same values of $\tb$ and $\rot$ as $L_{r+1}^{\pm}$.

We focus in, for the sake of argument, on $L_{r+1}^{-}$ and $\widehat{L}_{r+1}^{-}$, and we show that $T_-(L_{r+1}^{-})$ is not transversely isotopic to $T_-(\widehat{L}_{r+1}^{-})$, despite having the same self-linking number.

Consider first $T_+(L_{r+1}^{-})$.  It is in fact one of the dividing curves on $\partial N_r^k$, and is also at maximal self-linking number for $\widehat{K}_{r+1}$.  Similarly, $T_+(\widehat{L}_{r+1}^{-})$ is one of the dividing curves on $\partial \widehat{N}_r$, and is also at maximal self-linking number.  Now from \cite{[He1]} we know that $\widehat{K}_{r+1}$ is a fibered knot that supports the standard contact structure, since it is an iterated torus knot obtained by cabling positively at each iteration.  As a consequence, from \cite{[EV]}, we also know that $\widehat{K}_{r+1}$ has a unique transverse isotopy class at $\overline{sl}$.  Hence we know that $T_+(L_{r+1}^{-})$ and $T_+(\widehat{L}_{r+1}^{-})$ are transversely isotopic.  Thus there is a transverse isotopy (inducing an ambient contact isotopy) that takes these two dividing curves on the two different tori to each other.  Thus we may assume that $\partial N_r^k$ and $\partial \widehat{N}_r$ intersect along one component of the dividing curves; we call this component $\gamma_+$.

Now suppose, for contradiction, that $T_-(L_{r+1}^{-})$ is transversely isotopic to $T_-(\widehat{L}_{r+1}^{-})$.  These transverse knots are represented by the other two dividing curves on $\partial N_r^k$ and $\partial \widehat{N}_r$, respectively, and we are therefore assuming that there is a transverse isotopy taking one to the other.  This transverse isotopy will induce an ambient contact isotopy of $S^3$, including a contact isotopy of the two tori $\partial N_r^k$ and $\partial \widehat{N}_r$, with $\gamma_+$ sitting on both of them.  Since $\partial N_r^k$ and $\partial \widehat{N}_r$ are incompressible in $S^3 \backslash N(\gamma_+)$, we may assume that after a contact isotopy of $S^3$, $\partial N_r^k$ and $\partial \widehat{N}_r$ intersect along their two dividing curves, which we denote as $\gamma_+$ and $\gamma_-$.  We now observe that there is an isotopy (although, a priori, not necessarily a contact isotopy) of $\partial N_r^k$ to $\partial \widehat{N}_r$ relative to $\gamma_+$ and $\gamma_-$.  We claim that as a result $\widehat{N}_r$ cannot thicken, thus obtaining our contradiction.  We do this by noting that the isotopy of $\partial N_r^k$ to $\partial \widehat{N}_r$ relative to $\gamma_+$ and $\gamma_-$ may be accomplished by the attachment of successive bypasses, beginning with $\partial N_r^k$ and ending at $\partial \widehat{N}_r$; thus $\partial \widehat{N}_r$ is fixed throughout the process.  Since these bypasses are attached in the complement of the two dividing curves, none of these bypass attachments can change the boundary slope.  However, they may increase or decrease the number of dividing curves.  Starting with $T = \partial N_r^k$, we make the following inductive hypothesis, which we will prove is maintained after bypass attachments:   

\begin{itemize}
	\item[1.]  $T$ is a convex torus which contains $\gamma_+$ and $\gamma_-$, and thus has slope $-(k+1)/(A_rk+B_r)$.
	\item[2.]  $T$ is a boundary-parallel torus in a $[0,1]$-invariant $T^2 \times [0,1]$ with $\textrm{slope}(\Gamma_{T_0}) = \textrm{slope}(\Gamma_{T_1}) = -(k+1)/(A_rk+B_r)$, where the boundary tori have two dividing curves.
	\item[3.]  There is a contact diffeomorphism $\phi: S^3 \rightarrow S^3$ which takes $T^2 \times [0,1]$ to a standard $I$-invariant neighborhood of $\partial N_r^k$ and matches up their complements.
\end{itemize}

The argument that follows is similar to Lemma 6.8 in \cite{[EH1]}.  First note that item 1 is preserved after a bypass attachment, since such a bypass is in the complement of $\gamma_+$ and $\gamma_-$, and thus cannot change the slope of the dividing curves.  To see that items 2 and 3 are preserved, suppose that $T'$ is obtained from $T$ by a single bypass.  Since the slope was not changed, such a (non-trivial) bypass must either increase or decrease the number of dividing curves by 2.  Suppose first that the bypass is attached from the inside, so that $T' \subset N$, where $N$ is the solid torus bounded by $T$.  For convenience, suppose $T = T_{0.5}$ inside the $T^2 \times [0,1]$ satisfying items 2 and 3 of the inductive hypothesis.  Then we form the new $T^2 \times [0.5,1]$ by taking the old $T^2 \times [0.5,1]$ and adjoining the thickened torus between $T$ and $T'$.  Now $T'$ bounds a solid torus $N'$, and, by the classification of tight contact structures on solid tori, we can factor a nonrotative layer which is the new $T^2 \times [0,0.5]$.

Alternatively, if $T' \subset (S^3 \backslash N)$, then we know that $N'$ thickens to an $N_r^k$, and thus there exists a nonrotative outer layer $T^2 \times [0.5,1]$ for $S^3 \backslash N'$, where $T_1$ has two dividing curves.  Thus the proof is done, for after enough bypass attachments we will obtain $T=\partial \widehat{N}_r$, with $\widehat{N}_r$ non-thickenable.  But this is a contradiction, since $\widehat{N}_r$ does thicken. 
\end{proof}

\end{document}